\newcommand{\A}{\mathbb{A}}
 \newcommand{\C}{\mathbb{C}}
 \newcommand{\rO}{\mathrm{O}}
\newcommand{\cS}{\mathcal{S}}
\renewcommand{\c}[1]{\mathcal{#1}}
\renewcommand{\r}[1]{\mathrm{#1}}
\newcommand{\wtilde}{\widetilde}
\newcommand{\trpz}[1]{{{}^{\mathrm{t}}\negthinspace{#1}}}
\DeclareMathOperator{\GL}{\mathrm{GL}}
\DeclareMathOperator{\Sym}{\mathrm{Sym}}
\DeclareMathOperator{\Her}{\mathrm{Her}}
\DeclareMathOperator{\sHer}{\mathrm{sHer}}
\DeclareMathOperator{\Sp}{\mathrm{Sp}}
\DeclareMathOperator{\Mp}{\mathrm{Mp}}
\DeclareMathOperator{\tr}{\mathrm{tr}}
\DeclareMathOperator{\Ind}{\mathrm{Ind}}
\DeclareMathOperator{\Hom}{\mathrm{Hom}}
\DeclareMathOperator{\End}{\mathrm{End}}
\DeclareMathOperator{\Res}{\mathrm{Res}} 
\newcommand{\isom}{\cong}
\DeclareMathOperator{\lmod}{\backslash}
\let\Re\undefined
\DeclareMathOperator{\Re}{\mathrm{Re}}
\newcommand{\smatrix}[4]{\ensuremath\bigl( \begin{smallmatrix}
#1&#2\\ #3&#4
\end{smallmatrix} \bigr)}
\newcommand{\form}[2]{\langle{#1},{#2}\rangle}
\newtheorem{thm}{Theorem}[section]
\newtheorem{lemma}[thm]{Lemma}
\newtheorem{prop}[thm]{Proposition}
\newtheorem{cor}[thm]{Corollary} 
\theoremstyle{definition}
\theoremstyle{remark}
\newtheorem{rmk}[thm]{Remark}
\numberwithin{equation}{section}
\DeclareMathOperator{\Stab}{\mathrm{Stab}}
\newcommand{\til}{\tilde}
\begin{document}

\title{Irreducibility of Theta Lifting for Unitary Groups}
\author{Chenyan Wu}
\email{cywu@umn.edu}
\address{School of Mathematics, University of Minnesota,\\ 206 Church St. S.E., Minneapolis, MN 55455, USA}

\thanks{Corresponding author. Telephone number: +49(0)228-402-0.}
\keywords{regularised Siegel-Weil formula; first occurrence;  irreducibility of theta lift}


\date{}

\begin{abstract}
  This article shows that for unitary dual reductive pairs the first
  occurrence of theta lift of an irreducible cuspidal automorphic
  representation is irreducible. It also proves a refined tower
  property for theta lifts and the involutive property for twisted theta lifts.
\end{abstract}
\maketitle{}

\section*{Introduction}
\label{sec:intro}
This article studies the theta lifts between unitary groups. The main
result is that the `first occurrence' of theta lift is irreducible.
Let $k$ be a number field and $E$ a quadratic field extension of
$k$. Let $\A$ denote the adele ring of $k$. Fix a nontrivial additive
character $\psi$ of $k\lmod \A$. Let $X$ be a skew-Hermitian vector
space over $E$ and $Y$ a Hermitian vector space over $E$.  Let $G(X)$
(resp. $G(Y)$) denote the isometry group of $X$ (resp. $Y$). Fix a
pair of characters $\chi_1$ (resp. $\chi_2$) of $E^\times \lmod
\A_E^\times$ for the splitting of metaplectic cover over $G(X)$
(resp. $G(Y)$) (c.f. Sec. \ref{sec:split_meta}). Let $\pi$ be an irreducible
cuspidal automorphic representation of $G(X)$. Suppose that the theta
lift $\theta_{X,\psi}^Y(\pi)$ of $\pi$ from $G (X)$ to $G(Y)$ with
respect to $\psi$ and $( \chi_1,\chi_2)$ is nonzero and cuspidal. Then
we show that it is irreducible (Thm.~\ref{thm:theta_irred}). By the
tower property, here cuspidality is equivalent to saying that
$\theta_{X,\psi}^Y(\pi)$ is the first occurrence of the theta lift in
the Witt tower associated to $Y$. We also show an involution result
which is used in the proof of Thm.~\ref{thm:theta_irred}. Keep the
above assumption. Then the twisted theta lift
$\chi_1^{-1}\theta_{Y,\psi^{-1}}^X(\chi_2^{-1}\theta_{X,\psi}^Y(\pi))$
from $G(Y)$ back to $G(X)$ with respect to $\psi^{-1}$ and
$(\chi_1,\chi_2)$ is equal to $\pi$
(Thm. \ref{thm:theta_back_and_forth}). Analogous results hold for
twisted theta lift in the other direction.

This article is an extension of results of M\oe
glin\cite{MR1473166,MR1473165} and Jiang and Soudry\cite{MR2330445} to
the case of  unitary dual reductive pairs. M\oe glin\cite{MR1473166,MR1473165}
showed for the dual reductive pairs $\Sp_{2n}$ and $\rO(2m)$ the
irreducibility of first occurrence of theta lift in either
direction. She also proved the involutive property which is a key step
towards the proof of irreducibility. The odd orthogonal case was
treated by Jiang and Soudry\cite{MR2330445}. The groups involved are
the double cover $\wtilde{\Sp}(2n)$ of $\Sp(2n)$ and
$\rO(2m+1)$. Since in these cases the embedding of the dual reductive
pairs into the metaplectic group is canonical `non-twisted' theta lifts
are used.

Here we treat the dual reductive pair $G(X)$ and $G(Y)$ which are
unitary groups. The method of proof essentially follows M\oe glin and
Jiang and Soudry. However for unitary groups we need to treat the case
where neither $G(X)$ nor $G(Y)$ is quasi-split. Also because of the
non-uniqueness of splitting of metaplectic group over $G (Y)\times G
(X)$, we have to keep  track of the characters used to
determine a splitting and this results in the twist in theta lifts in our formula.

We sketch the idea of the proof and point out the difficulties. For a
non-negative integer $a$, form the skew-hermitian space $X_a$ by
adjoining $a$ hyperbolic planes $\ell_a=\ell_a^+ \oplus \ell_a^-$ to
$X$. First we show the involutive property. Via the regularised
Siegel-Weil formula for unitary groups due to Ichino\cite{MR2064052}
we show that for $A$ large enough the theta lift space
$\theta_{Y,\psi^{-1}}^{X_A}(\chi_2^{-1}\theta_{X,\psi}^Y(\pi))$ is
contained in a certain space of residues of Eisenstein series
(Prop.~\ref{prop:theta_is_Eis}). Here we need to be extra careful with
the choice of characters which determine the Weil representations
involved in constructing theta series. We use $(\chi_1,\chi_2)$ both
ways and $\psi$ and $\psi^{-1}$ for different directions of theta
lift. We want to remove the requirement that $A$ is large enough. Let
$a<A$. Let $Q_{A-a}$ be the parabolic subgroup of $G(X_A)$ stabilising
an $(A-a)$-dimensional isotropic subspace of $X_A$. Then we take
constant terms along $Q_{A-a}$ on the space of theta lift and the
space of Eisenstein series. On the theta side we expect to get
$\theta_{Y,\psi^{-1}}^{X_a}(\chi_2^{-1}\theta_{X,\psi}^Y(\pi))$ for
$a<A$. However we need a stronger tower property which is not known
for unitary groups. The result of
Rallis\cite{rallis84:_howe_dualit_conjec} on tower property does not
readily apply since it deals with symplectic and orthogonal groups
only and uses the property that symplectic group is always split. To
work around it we use mixed model for Weil representation, so unlike
the method in \cite{rallis84:_howe_dualit_conjec} we do not need to
completely linearise the Weil representation. Our global computation
is inspired by the local computation in \cite{MR1041060}. We are able
to generalise the tower property (Prop.~\ref{prop:tower_property}) and
the proof is simpler and more uniform than in
\cite{rallis84:_howe_dualit_conjec}. Thus we show that taking constant
term indeed gives the space
$\theta_{Y,\psi^{-1}}^{X_a}(\chi_2^{-1}\theta_{X,\psi}^Y(\pi))$. On
the other hand it can be shown that the constant term on the
Eisenstein side gives for $a>0$ residues of Eisenstein series, for
$a=0$ exactly $\chi_1\pi$ and for $a<0$ zero.  It should be mentioned
that we get a $\chi_1$-twist of $\pi$ because of the contribution from
determinants of unitary groups. Then the involutive property combined
with the tower property forces the irreducibility of
$\theta_{X,\psi}^Y(\pi)$.

The fundamental result of irreducibility of theta lifting for the
symplectic and orthogonal groups is used by Bergeron, Millson, and
M{\oe}glin in \cite{nicolas:_hodge} that gives Hodge type theorems on
Shimura varieties of orthogonal type. Thus our result has potential
application to the Shimura varieties of unitary type along the same
line.

\section{Notation}
\label{sec:notation}

Let $k$ be a number field 
and $E$ a quadratic extension of $k$. 
 Fix an element $\delta\in E$
 such that $\delta=-\overline {\delta}$. Let $\Delta = \delta^2$.
Let $\A$ be the adeles of
$k$. Let $X$ be a skew-Hermitian vector space of dimension $n$ over
$E$ with form $\form{\ }{\ }_X$ and $Y$ a Hermitian vector space of
dimension $m$ over $E$ with form $\form{\ }{\ }_Y$. Note that we
assume that $\form{\ }{\ }_X$ is linear in the first variable and
conjugate linear in the second variable whereas $\form{\ }{\ }_Y$ is
conjugate linear in the first variable and linear in the second
variable. Let $G(X)$ (resp. $G(Y)$) be the isometry group for $X$
(resp. $Y$). We let $G(X)$ act on the right and $G(Y)$ on the
left. Fix an additive character $\psi$ of $k\lmod \A$ and let
$\psi_E=\psi\circ \frac{1}{2}\tr_{E/k}$.  The $k$-vector space
$W=\Res_{E/k} ( Y\otimes_E X)$ endowed with the form
\begin{equation*}
  \form{y_1\otimes x_1}{y_2\otimes x_2}:= \tr_{E/k}\form{y_1}{y_2}_Y\overline{\form{x_1}{x_2}}_X
\end{equation*}
is symplectic. Sometimes we will drop $\Res _{E/k}$ to simplify
notation.  Let $\chi_1$ and $\chi_2$ be two characters of $E^\times
\lmod \A_E^\times$ such that $\chi_1|_{\A^\times} = \epsilon_{E/k}^m$
and $\chi_2|_{\A^\times} = \epsilon_{E/k}^n$ where $\epsilon_{E/k}$ is
the quadratic character of $k^\times \lmod \A^\times$ associated to
$E/k$ via Class Field Theory.  This is necessary if we need to
determine a splitting of the metaplectic group over the unitary dual
reductive pairs. Please see \cite{MR1286835} for more details. It
should be pointed out that $\chi_1$ (resp. $\chi_2$) is used to
determine an embedding of $G (X)$ (resp. $G (Y)$) into the metaplectic
group $\Mp (W)$ but the choice of $\chi_1$ (resp. $\chi_2$) is
restricted by the parity of the dimension of $G (Y)$ (resp. $G (X)$).

Let $X_a$ be the space formed by adjoining $a$ hyperbolic planes
$\ell_a=\ell_a^+ \oplus \ell_a^-$ to $X$. It is in the same Witt tower
as $X$. We may also define $X_{-b}$ if we can remove $b$ hyperbolic
planes from $X$, so $X=\ell_b^+ \oplus X_{-b} \oplus \ell_b^-$. Define
similarly $Y_a$ and $Y_{-b}$. Note that $\ell_a^\pm$ should conform to
the type of the original space. We hope this choice of notation does
not cause confusion. We will add in subscripts to indicate in which
space the $\ell_a^\pm$'s lie.

\section{Weil Representation}
\label{sec:weil_rep}

In this section we work in the local case, so temporarily we let $k$ denote a local field. 

\subsection{Representation of Metaplectic Group}
\label{sec:meta_rep}

We recall some results on the Weil representation. Because of the need to
describe the mixed model which figures prominently in the proof of
Rallis tower property for unitary groups, it is necessary that we
start with representations of the Heisenberg group. Then we describe
various models of Weil representation. References are
\cite{kudla:_notes_local_theta_corres} and \cite{MR1041060}.

Let $W=W^+ \oplus W^-$ be a symplectic space over $k$ with given
complete polarisation. One model of the representation of the
Heisenberg group $H(W)$ with central character $\psi$ is realised on
the space $S_{W^-}:=\Ind_{H(W^-)}^{H(W)}\psi$ where Schwartz induction
is used. We also let $\psi$ denote the character of $H(W^-):= W^-
\oplus k$ defined by $(w,t)\mapsto \psi(t)$. Let $\rho$ denote this
representation. For $g \in \Sp(W)$ define the representation $\rho^g$
by $\rho^g (h) = \rho (h^g)$ for $h\in H(W)$ where if $h = (w,t)$ then
$h^g = (wg,t)$. It also acts on $S_{W^-}$ with central character
$\psi$. By the Stone-von Neumann theorem,  there is an isomorphism between the  representations 
$\rho$ and $\rho^g$, which we now make explicit.

Consider the map $A^0(g) : S_{W^-} \rightarrow S_{W^-g^{-1}}$ given
by $(A^0(g)f)(h) := f(h^g)$. We have $\rho (h) A^0 (g) = A^0 (g) \rho
(h^g)$. In addition there exists an $H (W)$-intertwining isomorphism
$I_{W^-g^{-1},W^-}$ between $S_{W^-g^{-1}}$ and $S_{W^-}$ given by
\begin{equation}
  I_{W^-g^{-1},W^-} (f) (\cdot) = \int_{W^-g^{-1} \cap W^- \lmod W^-} f ((w^-,0)\cdot) dw^-
\end{equation}
where the choice of Haar measure is specified below. Define then $A (g) =
I_{W^-g^{-1},W^-}\circ A^0 (g)$. We use the unique choice of Haar
measure such that $A (g)$ is unitary.

Note that $A$ is not a representation of $\Sp(W)$, but $A$ lifts to a
representation, called the Weil representation, of
the metaplectic group $\Mp(W)$ which is a nontrivial $\C^1$-extension
of $\Sp(W)$. However if we restrict to a standard unipotent subgroup
of $\Sp(W)$, $A$ gives a representation(c.f. \cite{MR1286835}). 

The Schr\"odinger model of the Weil representation is realised on $\cS (W^+)$ which denotes the
Schwartz space of functions on $W^+$.  There is an isomorphism of
representations of $H(W)$
\begin{align*}
  S_{W^-} &\rightarrow \cS(W^+)\\
f &\mapsto \phi
\end{align*}
where $\phi$ is given by $\phi(w^+) = f(w^+,0)$ for $w^+ \in W^+$. To go
back, given $\phi$ then its preimage $f$ is given by 
\begin{align}
  f(w^++w^-,t) =
f((w^-,t+ \frac{1}{2}\form{w^+}{w^-}_W) (w^+,0)) =
\psi(t+\frac{1}{2}\form{w^+}{w^-}_W)\phi(w^+).
\end{align}
The action of $H(W)$ on $S(W^+)$ is given by\cite{kudla:_notes_local_theta_corres}
\begin{align}
  \rho((w^+ + w^-),t)\phi(w^+_0) = \psi(t+\form{w_0^+}{w^-}_W+\frac{1}{2}\form{w^+}{w^-}_W)\phi(w^+_0+w^+).
\end{align}

Let $g \in
\Sp(W)$ and write $g$ as $\smatrix{a}{b}{c}{d}$ with respect to the
polarisation. Note that $\Sp(W)$ acts on $W$ from the right, so
$a\in \End(W^+)$, $b\in \Hom(W^+,W^-)$, $c\in\Hom(W^-,W^+)$ and
$d\in\End(W^-)$. Then we transfer the operator $A (g)$ to 
$\cS(W^+)$ and denote it by $r(g)$. Then $r (g)$  is given by\cite{kudla:_notes_local_theta_corres}
\begin{multline}
 ( r(g)\phi)(w^+) = \int_{\ker(c)\lmod W^-}
 \psi\left(\frac{1}{2}\form{w^+a}{w^+b}+\form{w^-c}{w^+b}+\frac{1}{2}\form{w^-c}{w^-d}\right)\\
 \times\phi(w^+a+w^-c) d \mu_g w^-
\end{multline}
where we take the Haar measure on $\ker(c)\lmod W^-$ so that $r (g)$
becomes unitary. 
The obstruction to $r$ being a representation is given by the cocycle
$c_{W^-}(g_1,g_2)$ which is equal to $\gamma_k(\psi\circ
L(W^-,W^-g_2^{-1},W^-g_1))$, where $\gamma_k$ is the Weil index which
takes values in $8$-th roots of unity and $L$ gives the Leray
invariant. 
Please see \cite{rao93:_some_explic_formul_in_theor} for
the definitions of these. We lift $r$ to a representation of the
metaplectic group $\Mp(W)$ on $\cS (W^+)$ and denote it by
$\omega_{W,\psi}$. This is the Schr\"odinger model of the Weil
representation.

More precisely, $\Mp (W)$ is set-theoretically $\Sp (W)\times \C^1$ with multiplication given by
\begin{equation*}
  (g_1,z_1) (g_2,z_2)= (g_1g_2,z_1z_2c_{W^-}(g_1,g_2) )
\end{equation*}
and $r$ lifts to a representation of $\Mp (W)$ on $\cS (W^+)$ as follows
\begin{equation*}
  \omega_{W,\psi} (g,z) \phi = z \cdot r (g)\phi.
\end{equation*}
This definition of $\Mp (W)$ actually depends on $\psi$.

Now we note some properties\cite [pp. 36-37] {MR1041060} of the Weil representation. Let $W'$ be
the vector space with the same underlying vector space as $W$ but with
form $\form{\ }{\ }_{W'}= -\form{\ }{\ }_{W}$. We consider the impact
of changing $W$ to $W'$ and $\psi$ to $\psi_{-1}$ simultaneously.

\begin{prop}\label{prop:W_to_Wneg}
Let $j: \Sp(W) \rightarrow \Sp(W')$ be
the natural identification and extend it to 
\begin{align*}
  \til{\jmath}:\Mp(W)&\rightarrow \Mp(W')\\
(g,z) &\mapsto (j(g),z).
\end{align*}
Then  $\omega_{W,\psi} \isom \omega_{W',\psi_{-1}}\circ \til{\jmath}$.
\end{prop}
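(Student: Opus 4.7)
The plan is to realize both $\omega_{W,\psi}$ and $\omega_{W',\psi_{-1}}$ on the common Schwartz space $\cS(W^+)=\cS((W')^+)$, using that the polarization $W=W^+\oplus W^-$ is simultaneously a polarization of $W'$, and then exhibit the \emph{identity map} as the intertwiner between $\omega_{W,\psi}$ and $\omega_{W',\psi_{-1}}\circ\tilde\jmath$. Two compatibilities need checking: the cocycles defining $\Mp(W)$ and $\Mp(W')$ agree under $j$, so that $\tilde\jmath(g,z)=(j(g),z)$ really is a group homomorphism; and the pre-lift Schr\"odinger operators $r_{W,\psi}(g)$ and $r_{W',\psi_{-1}}(j(g))$ coincide as operators on $\cS(W^+)$.

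For the cocycle comparison, recall $c_{W^-}(g_1,g_2)=\gamma_k(\psi\circ L(W^-,W^-g_2^{-1},W^-g_1))$. The Leray invariant is a quadratic form constructed from the symplectic pairing, so replacing $\langle\,,\,\rangle_W$ by $-\langle\,,\,\rangle_W$ replaces $L$ by $-L$. Since $\psi_{-1}\circ(-L)$ and $\psi\circ L$ coincide as functions, their Weil indices coincide, and therefore
\begin{equation*}
c_{(W')^-}^{W',\psi_{-1}}(g_1,g_2)=\gamma_k(\psi_{-1}\circ(-L))=\gamma_k(\psi\circ L)=c_{W^-}^{W,\psi}(g_1,g_2).
\end{equation*}
This confirms that $\tilde\jmath$ is a well-defined lift of $j$ to the metaplectic covers.

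For the operator comparison, I substitute $\langle\,,\,\rangle_{W'}=-\langle\,,\,\rangle_W$ into the explicit integral formula defining $r(g)$. Every term in the argument of $\psi_{-1}$ gets negated, and the identity $\psi_{-1}(-x)=\psi(x)$ restores the integrand of $r_{W,\psi}(g)$. The Haar measure on $\ker(c)\backslash W^-$ is pinned down by requiring $r(g)$ to be unitary on $\cS(W^+)$ for the standard $L^2$ inner product, a datum that depends only on $W^+$ as an abstract vector space and not on the symplectic form; so the two normalizations coincide. Combining this with the cocycle comparison yields
\begin{equation*}
\omega_{W,\psi}(g,z)=z\cdot r_{W,\psi}(g)=z\cdot r_{W',\psi_{-1}}(j(g))=(\omega_{W',\psi_{-1}}\circ\tilde\jmath)(g,z),
\end{equation*}
so the identity intertwiner gives the desired isomorphism.

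The only genuinely delicate ingredient is the sign flip $L\mapsto -L$ under the involution $\langle\,,\,\rangle\mapsto-\langle\,,\,\rangle$; I would spell this out from the explicit construction in \cite{rao93:_some_explic_formul_in_theor}, where the symplectic form enters the Leray invariant essentially linearly, so the sign change is immediate. Once this is in hand the rest of the argument is a bookkeeping exercise in the double cancellation $\psi\leftrightarrow\psi_{-1}$, $\langle\,,\,\rangle\leftrightarrow-\langle\,,\,\rangle$.
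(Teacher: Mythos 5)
The paper offers no proof of this proposition---it is cited directly from \cite[pp.~36--37]{MR1041060}---so there is no in-text argument to compare against. Your argument is correct and is essentially the one implicit in that source. The key ingredients all check out: $W$ and $W'$ have the same underlying vector space, the same Lagrangians, and the same isometry group, so the shared polarization $W^+\oplus W^-$ serves both sides and $j$ is the identity on the underlying $\Sp$. The Leray invariant is a $k$-valued quadratic form built from the symplectic pairing, hence is negated when the pairing is; thus $\psi_{-1}\circ L_{W'}=\psi\circ L_W$ pointwise, the Rao cocycles coincide, and $\tilde{\jmath}$ is a well-defined homomorphism between the two (a priori different, $\psi$-dependent) metaplectic extensions. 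In the explicit kernel for $r(g)$ on $\cS(W^+)$ every pairing is negated while $\psi$ is replaced by $\psi_{-1}$, so the two kernels literally agree; and with a fixed Haar measure on $W^+=(W')^+$ the unitarity normalization of $d\mu_g$ is then forced to be identical, since the operators to be normalized are the same. Hence the identity map on $\cS(W^+)$ intertwines. The proof is complete modulo the sign flip $L\mapsto -L$, which you correctly defer to the explicit construction in \cite{rao93:_some_explic_formul_in_theor}; this is the only genuinely substantive point and your reasoning about it is sound.
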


Another property that we will frequently invoke is the following

\begin{prop}\label{prop:direct_sum_of_weil_rep}
  Let $W_1$ and $W_2$ be two symplectic spaces and let $W=W_1\oplus W_2$. Let $j$ denote the natural embedding $\Sp(W_1)\times \Sp(W_2)\rightarrow \Sp(W)$ and extend it to the homomorphism
  \begin{align*}
\til{\jmath}: \Mp(W_1) \times \Mp(W_2) &\rightarrow \Mp(W) \\     
   ((g_1,z_1),(g_2,z_2)) &\mapsto (j(g_1,g_2),z_1z_2).
  \end{align*}
Then $\omega_{W,\psi}\circ \til{\jmath} \isom \omega_{W_1,\psi}\otimes \omega_{W_2,\psi}$ where $\otimes$ is replaced by completed tensor $\hat {\otimes}$ in the archimedean case.
\end{prop}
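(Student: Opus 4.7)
The plan is to work in the Schr\"odinger model throughout and reduce the statement to (a) an elementary factorisation of the integral formula for $r(g)$ and (b) a cocycle identity for $c_{W^-}$ restricted to block-diagonal elements.

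First, choose compatible complete polarisations $W_i = W_i^+ \oplus W_i^-$ and set $W^\pm = W_1^\pm \oplus W_2^\pm$, so that $W = W^+ \oplus W^-$ is a polarisation of $W$. There is a natural isomorphism
\begin{equation*}
  \cS(W_1^+)\,\hat{\otimes}\,\cS(W_2^+) \xrightarrow{\sim} \cS(W^+), \qquad \phi_1\otimes\phi_2 \mapsto \bigl((w_1^+,w_2^+)\mapsto \phi_1(w_1^+)\phi_2(w_2^+)\bigr),
\end{equation*}
which I will use as the proposed intertwiner. Under $j$, an element $(g_1,g_2)\in \Sp(W_1)\times \Sp(W_2)$ sits block-diagonally in $\Sp(W)$; writing each $g_i = \smatrix{a_i}{b_i}{c_i}{d_i}$ with respect to $W_i = W_i^+ \oplus W_i^-$, one has $a = a_1\oplus a_2$, $b = b_1\oplus b_2$, etc., and in particular $\ker(c) = \ker(c_1)\oplus \ker(c_2)$.

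Second, I would plug these block decompositions into the explicit formula for $r(g)$ quoted from \cite{kudla:_notes_local_theta_corres}. The quadratic phase in the integrand separates as a sum over the two blocks because the symplectic form on $W$ is the orthogonal sum of the forms on $W_1$ and $W_2$; the argument $w^+a+w^-c$ likewise splits; and the quotient domain $\ker(c)\lmod W^-$ factors as a product of the two analogous quotients. With the tensor-product Haar measure the integral factors as a product, giving $r(j(g_1,g_2))(\phi_1\otimes\phi_2) = r(g_1)\phi_1 \otimes r(g_2)\phi_2$. Uniqueness of the unitarising Haar measure forces this choice to be the correct one on the $W$ side.

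Third, and this is the main technical point, I need the cocycle identity
\begin{equation*}
  c_{W^-}(j(g_1,g_2),\,j(g_1',g_2')) = c_{W_1^-}(g_1,g_1')\,c_{W_2^-}(g_2,g_2').
\end{equation*}
Since the Lagrangians $W^-$, $W^- j(g_1',g_2')^{-1}$, $W^- j(g_1,g_2)$ are all orthogonal direct sums of the corresponding Lagrangians in $W_1$ and $W_2$, the Leray invariant $L(W^-,W^- j(g_1',g_2')^{-1}, W^- j(g_1,g_2))$ decomposes as an orthogonal direct sum of $L(W_i^-, W_i^-(g_i')^{-1}, W_i^- g_i)$ for $i=1,2$; this is the one nontrivial input I would cite from \cite{rao93:_some_explic_formul_in_theor}. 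Multiplicativity of the Weil index $\gamma_k$ on orthogonal sums of quadratic forms then gives the displayed identity.

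Combining these, the set-theoretic map $\tilde{\jmath}$ is a group homomorphism $\Mp(W_1)\times \Mp(W_2)\to \Mp(W)$, and the isomorphism of Schr\"odinger spaces intertwines the lifted actions $\omega_{W,\psi}\circ \tilde{\jmath}$ and $\omega_{W_1,\psi}\otimes \omega_{W_2,\psi}$ on the level of $\Mp(W_1)\times \Mp(W_2)$. The main obstacle is bookkeeping for the Leray invariant and the consistent choice of Haar measures; once those are established, the proposition follows by direct substitution into the Schr\"odinger formulas.
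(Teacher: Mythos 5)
The paper does not prove this proposition; it is stated as a known property with a citation to M{\oe}glin--Vign\'eras--Waldspurger \cite[pp. 36--37]{MR1041060}. Your proof is correct and is essentially the standard argument one finds in that reference: compatibility of polarisations gives the natural identification $\cS(W_1^+)\hat{\otimes}\cS(W_2^+)\cong\cS(W^+)$, block-diagonality of $j(g_1,g_2)$ makes the Schr\"odinger integral factor, additivity of the Leray invariant for orthogonal direct sums of Lagrangians plus multiplicativity of the Weil index $\gamma_k$ on orthogonal sums of quadratic forms gives the cocycle identity $c_{W^-}(j(g_1,g_2),j(g_1',g_2'))=c_{W_1^-}(g_1,g_1')c_{W_2^-}(g_2,g_2')$, and that identity is exactly what makes $\tilde{\jmath}$ a homomorphism and the Schr\"odinger-model isomorphism an intertwiner. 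The one point you flag and correctly dispose of -- that the unitarising Haar measure on $\ker(c)\lmod W^-$ is the product of the unitarising measures on the factors -- follows because a tensor product of unitaries is unitary and the unitarising normalisation is unique.
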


\subsection{Splitting of Metaplectic Cover}
\label{sec:split_meta}
Let $\eta=\psi_{1/2}$ as we use it often in Weil index. The metaplectic cover splits over the dual reductive pair $G(Y)\times G(X)$ since we are dealing with unitary groups. Given  a character $\chi_1$ of $E^\times$
such that $\chi_1|_{k^\times} = \epsilon_{E/k}^m$, there corresponds a splitting 
\begin{equation}
\xymatrix{
&   & \Mp(Y\otimes X) \ar[d]\\
&G(X) \ar@{.>}[ru]^{\iota_{\chi_1}} \ar[r] & \Sp(Y\otimes X)} 
\end{equation}
as in \cite{MR1286835,MR1327161}.  Of course $\iota_{\chi_1}$ also depends on $Y$. 

In the case where $X$ is split the splitting has an explicit description\cite [Theorem~3.1] {MR1286835}:
\begin{align}
  G (X) & \xrightarrow{\iota_{\chi_1}} \Mp (Y\otimes X) \\
  g &\mapsto (g, \beta_Y (g))
\end{align}
where $\beta_Y (g) = \chi_1 (x (g))\gamma_k (\eta\circ
RY)^{j(g)}$. The definitions of $x (g)$ and $j (g)$ can be found in
\cite [pp 370-371] {MR1286835} and $RY$ is the underlying $k$-vector
space of $Y$ equipped with the symmetric bilinear form $\frac{1}{2} \tr_{E/k}
\form{\ }{\ }_Y$.

  By \cite[Cor. A. 3]{MR1327161} the
splitting is compatible with taking direct sum. More precisely,
suppose that $X=X_1\oplus X_2$ is a direct sum of two skew-Hermitian spaces and use $\chi_1$ to determine  embeddings of $G (X)$, $G (X_1)$ and $G (X_2)$ into the corresponding metaplectic groups. Then we have a
commutative diagram:
\begin{equation}
  \xymatrix{
    &  G(X) \ar[r]^-{\iota_{\chi_1}}  & \Mp(Y\otimes X) \\
    &  G(X_1) \times G(X_2)\ar[u] \ar[r]^-{\iota_{\chi_1}\times\iota_{\chi_1}} & \Mp(Y\otimes X_1) \times  \Mp(Y\otimes X_2) \ar[u]^{\tilde{\jmath}}  } 
\end{equation}
where in the $\C^1$-part of the metaplectic groups $\tilde {\jmath}$ sends $( z_1 , z_2) $ to $z_1z_2$. 

Consider changing $\chi_1$ to some other character $\mu_1$ which satisfies $\mu_1|_{k^\times} =  \epsilon_{E/k}^m$. Let $\mu_1=\nu\chi_1$. Then $\nu$ is trivial on $k^\times$ and we construct a character $\nu' $ of $E^1$ by setting $\nu' (a/\overline {a}) = \nu(a)$. Then (c.f. \cite{MR1286835})
\begin{equation}\label{eq:change_character}
  \iota_{\mu_1} (g) = \nu' (\det g) \cdot \iota_{\chi_1} (g).
\end{equation}
It is useful to note that $\nu'^2 = \nu|_{E^1}$.

There is also an analogous version on the $Y$ side. Let $Y^\delta$ be
the skew-Hermitian space with the same underlying space as $Y$ but
with form $\form{y_1}{y_2 }_{Y^\delta} = \delta\form{y_2 }{y_1 }_Y$
and let $X^{\delta^{-1}}$ be the Hermitian space with the same
underlying space as $X$ but with form $\form{x_1}{x_2
}_{X^{\delta^{-1}}} = \delta^{-1}\form{x_2 }{x_1 }_X$. We have natural identifications $G
(X^{\delta^{-1} })=G (X)$, $G (Y^{\delta}) = G (Y)$ and $\Mp
(X^{\delta^{-1}}\otimes Y^{\delta}) = \Mp (Y\otimes X)$. Then the
roles of $G (X)$ and $G (Y)$ are completely symmetric.

We also note when $Y$ is split using $\chi_2$ such that $\chi_2|_{k^\times} = \epsilon_{E/k}^n$ we have explicit splitting
\begin{align}\label{eq:explicit_splitting_Y}
  G (Y) &\xrightarrow{\iota_{\chi_2}} \Mp (Y\otimes X) \\
  h &\mapsto (h, \beta_{X} (h))
\end{align}
where $\beta_{X} (h) = \chi_2 (x (h))\chi_2 (\delta)^{j
  (h)}\gamma_k (\eta\circ RX^{\delta^{-1}})^{j (h)}$ by \cite
[Theorem~3.1] {MR1286835}.

\subsection{Representation of Dual Reductive Pair}
\label{sec:rep_dual_pair}

We want to describe the Weil representation for dual reductive pairs
more explicitly, especially in the case where the skew-Hermitian space
is split. Suppose that $X$ is split with $\dim X =2n$, so
$G(X)=\r{U}(n,n)$. Choose a basis of $X$ such that the skew-Hermitian
form is given by $\smatrix{0}{1_n}{-1_n}{0}$. Let $m=\dim Y$.

We take $\chi_2$ to be the trivial character. This is allowed since $\dim X$ is
even. We claim that the corresponding splitting  is given by sending $h\in G (Y)$ to
$(h,1)\in \Mp (Y\otimes_E X)$. Assume first that $Y$ is split. Then we have explicit
description of splitting \eqref{eq:explicit_splitting_Y}. We need to
show that the quantity $\beta_X (h)$ for $h\in G (Y)$ is $1$. Let
$V=X^{\delta^{-1}}$ to simplify notation. Then as given in \cite
[Theorem~3.1] {MR1286835} we have
\begin{align}\label{eq:compute_beta}
  \beta_X (h) = \gamma_k (\eta\circ RV)^j 
  = (\Delta,\det V)_k\gamma_k (-\Delta,\eta)^{2n}\gamma_k (-1,\eta)^{-2n}
\end{align}
where $(\ ,\ )_k$ denotes the Hilbert symbol. Using properties of Weil index \cite [Lemma~4.1] {kudla:_notes_local_theta_corres}
we find \eqref{eq:compute_beta} is equal to
\begin{align}
  (\Delta,\det V)_k (-\Delta,-1)^{n}(-1,-1)^{-n} 
= (\Delta, \delta^{-2n})_k (\Delta,-1)^{n} 
=  (\Delta, -\Delta)^{n}_k
=1.
\end{align}
When $Y$ is non-split then splitting is determined via doubling. More precisely first we determine a splitting for $G (Y\oplus Y')$ and then restrict it to the subgroup $G (Y)$. Thus our claim is also true for non-split $Y$.

 Then the Weil representation of
$G(Y)\times G(X)$ is characterised by\cite{MR2064052}
\begin{equation}
  \begin{split}
      \label{eq:weil_rep}
  \omega\left(
    \begin{pmatrix}
      A &0 \\ 0 & \trpz{\bar{A}^{-1}}
    \end{pmatrix}\right)\phi(x) = &\chi_1(\det A)|\det A|_{E}^{\dim Y/2 } \phi(xA) \\
  \omega\left(
    \begin{pmatrix}
      1 &B \\ 0 & 1
    \end{pmatrix}\right)\phi(x) = &\phi(x)\psi_E(\frac{1}{2}\tr(\form{x}{x}_YB))\\
  \omega\left(
    \begin{pmatrix}
      0 &-1 \\ 1 & 0
    \end{pmatrix}\right)\phi(x) = &\gamma_k(\psi_{\frac{1}{2}}\circ RY)^{-n} \int_{Y^n(k)} \phi(y)\psi_E(\tr \form{y}{-x}_Y) dy\\
\omega(h)\phi(x)=&\phi(h^{-1}x).
  \end{split}
\end{equation}
Note that we regard $X$ and $Y$ as schemes defined over $k$. In
particular $Y(k)\isom E^{m}$. 

\subsection{Structure of Parabolic Subgroups}
\label{sec:structure_unipotent}

To prepare for the discussion of the mixed model we describe the structure
of parabolic subgroups. We do this in a coordinate free way and we
deviate a little from previous notation.

Let $E$ be a quadratic extension of $k$ or just $k$ itself. Now let
$W$ be an $\epsilon$-Hermitian space with $\epsilon=\pm 1$ to signify
whether the form is Hermitian or skew-Hermitian. Suppose $W$ can be
written as $\ell_a^+ \oplus W_0 \oplus \ell_a^-$. If the isometry
group acts on the right then we consider the parabolic subgroup
stabilising $\ell_a^-$. If the isometry group acts on the left then we
consider the parabolic subgroup stabilising $\ell_a^+$. Then the
parabolic subgroup of $G(W)$ has Levi part consisting of elements
\begin{align}
m(\alpha,\delta):=
  \begin{pmatrix}
    \alpha & &\\
           &\delta & \\
           &    & (\alpha^*)^{-1}
  \end{pmatrix}
\end{align}
where $\alpha \in \GL(\ell_a^+)$ and $\alpha^* \in \GL(\ell_a^-)$ is
the adjoint of $\alpha$ and $\delta \in G(W_0)$. Adjoints are taken with respect to the pairing $\form {\ } {\ }_W$.
 If the action is on the right then it has unipotent subgroup consisting of elements of the form
\begin{align}\label{eq:def_n_mu_beta}
n(\mu,\beta):=
  \begin{pmatrix}
    1 & \mu & \beta - \mu^*\mu/2\\
           &1 & -\mu^*\\
           &    & 1
  \end{pmatrix}
\end{align}
where $\mu \in \Hom_E(\ell_a^+,W_0)$ and $\beta \in
\Hom_E(\ell_a^+,\ell_a^-)$ such that $\beta+\beta^*=0$. 
If the action is on the left then it has unipotent subgroup consisting of elements of the form
\begin{align}
n(\mu,\beta):=
  \begin{pmatrix}
    1 & \mu & \beta - \mu\mu^*/2\\
           &1 & -\mu^*\\
           &    & 1
  \end{pmatrix}
\end{align}
where $\mu \in \Hom_E(W_0,\ell_a^+)$ and $\beta \in
\Hom_E(\ell_a^-,\ell_a^+)$ such that $\beta+\beta^*=0$. 

In particular after taking dual standard basis if $W$ is symplectic then $\beta$ can be regarded as in
$\Sym_a$; if $W$ is Hermitian then $\beta$ can be regarded as in
$\sHer_a$; if $W$ is skew-Hermitian then $\beta$ can be regarded as in
$\Her_a$.

\subsection{Mixed Model}
\label{sec:mixed_model}

We are more concerned with the action of the unipotent subgroups on
the mixed model. Note that the splitting is trivial over our unipotent subgroups.
 For all other expressions that we will need we will
invariably reduce via Prop.~\ref{prop:direct_sum_of_weil_rep} to the split
case. This is key in the demonstration of tower property.

Let $V=V^+\oplus V^-$ be a symplectic space.  Let $W_0$ be another
symplectic space and consider the symplectic space $W=V\oplus
W_0$. Thus we are adding hyperbolic planes to $W_0$. Assume we have
polarisation $W_0=W_0^+ \oplus W_0^-$. Suppose the
representation of the Heisenberg group $H(W)$ is realised on
$S_{W_0^-\oplus V^-}$.  Recall the element $n(\mu,\beta)$ defined in
\eqref{eq:def_n_mu_beta}. We compute the action of the operator
$A(n(\mu,0))$ on $f_0\otimes f \in S_{W_0^-} \otimes S_{V^-}$. Note that
$n(\mu,0)$ preserves $W_0^-\oplus V^-$ and thus there is no need to apply
`$I_{W^-g^{-1},W^-}$'. We compute:
\begin{align*}
  &A(n(\mu,0))(f_0 \otimes f) ((w_0,t), (v^+  ,0)) \\
  =& f_0((w_0+ \mu(v^+),t))f( (v^+ - \mu^*(w_0)-\frac{1}{2}\mu^* \mu(v^+),0))\\
  =& f_0((w_0,t)(\mu(v^+),-\frac{1}{2}\form{w_0}{\mu(v^+)}_{W_0})) \\
  &\qquad\times f( (- \mu^*(w_0)-\frac{1}{2}\mu^* \mu(v^+),\frac{1}{2}\form{ \mu^*(w_0)+\frac{1}{2}\mu^* \mu(v^+)}{v^+}_{V})(v^+,0))\\
  = &f_0((w_0,t)(\mu(v^+),0))\\
&  \times f( (- \mu^*(w_0)-\frac{1}{2}\mu^* \mu(v^+),\frac{1}{2}\form{ \mu^*(w_0)+\frac{1}{2}\mu^* \mu(v^+)}{v^+}_{V}-\frac{1}{2}\form{w_0}{\mu(v^+)}_{W_0})(v^+,0))\\
=&f_0((w_0,t)(\mu(v^+),0))
  f( (- \mu^*(w_0)-\frac{1}{2}\mu^* \mu(v^+),0)(v^+,0))\\
 =&\rho_0((\mu(v^+),0))f_0((w_0,t)) \cdot  f(v^+,0)
\end{align*}
where $\rho_0$ is the representation of $H(W_0)$ on $S_{W_0^-}$.
We transfer the action to the model $S_{W_0^-}\otimes \cS(V^+)$ to get
\begin{equation}\label{eq:action_n_mu_zero}
   r(n(\mu,0))(f_0 \otimes \phi) ((w_0,t), v^+ ) = \rho_0((\mu(v^+),0))f_0((w_0,t)) \cdot  \phi(v^+).
\end{equation}
This is unitary.

Now we describe the Weil representation $\omega_{X,Y}$ of $G(Y) \times G(X)$ on the mixed
model for certain unipotent elements. We do not assume that $X$ or $Y$ is split. Suppose that $X$ can
be written as $X= \ell_a^+ \oplus X_0 \oplus \ell_a^-$. Here $X_0$ is
not necessarily anisotropic. Let $S_0$ be any model for the Weil
representation $G(Y)\times G(X_0)$. Let the Weil representation of
$G(Y)\times G(\ell_a^+\oplus \ell_a^-)$ be realised on $\cS(Y\otimes
\ell_a^+)$ as in Sec. \ref{sec:rep_dual_pair}. Note that $Y\otimes
\ell_a^+ \isom Y^a$. We describe the action of $n(\mu,0)\in G(X)$ on
the model $S_0\otimes \cS(Y\otimes \ell_a^+)$, where $\mu \in
\Hom_E(\ell_a^+,X_0)$. In the archimedean version we replace tensor by
completed tensor. This is the special case where $W=\Res_{E/k} (Y\otimes_E X)$ and thus $V^\pm
= \Res_{E/k} (Y\otimes \ell_a^\pm)$ and $W_0 = \Res_{E/k} (Y\otimes X_0)$.
Thus by \eqref{eq:action_n_mu_zero} we find for $\phi_0\otimes \phi \in S_0\otimes \cS (Y\otimes \ell_a^+)$
\begin{equation}
  \label{eq:action_n_mu_zero_dual_red}
  \omega_{X,Y}(n(\mu,0),1_{G(Y)}) (\phi_0\otimes
  \phi)(\cdot, y) = \rho_0((\mathbf{1}_Y\otimes \mu(y),0))\phi_0(\cdot)
\times\phi(y)
\end{equation}
where $\rho_0$ is the representation of $H (Y\otimes X_0)$ on $S_0$. 

\section{Tower Property}
\label{sec:tower_property}


Let $\pi$ be an irreducible cuspidal automorphic representation of
$G(Y)$. Let $W^+$ be some maximal isotropic subspace of
$W=\Res_{E/k}Y\otimes_E X$. Let the Weil representation $\omega$ of
$G(Y)\times G(X)$ be realised on the Schwartz space $\cS (W^+)$ with
respect to $\psi$ and $(\chi_1,\chi_2)$. For $f\in \pi$ and $\phi \in \cS (W^+)$
we define
\begin{equation}
  \theta(g,\phi,f)= \int_{[G(Y)]} \theta_{X,Y}(g,h,\phi) f(h) dh,
\end{equation}
where $\theta_{X,Y}(g,h,\phi) = \sum_{w\in W^+ (k)}\omega (g,h)\phi
(w)$. When there is no need to emphasise the chosen maximal isotropic
subspace $W^+$ we write  $(Y\otimes X)^+$ for some
maximal isotropic subspace of $\Res_{E/k}Y\otimes_E X$ and the space $\cS (W^+)$ will be denoted as
$\cS_{X,Y}$. The global theta lift $\theta_Y^X(\pi)$ of $\pi$ from
$G(Y)$ to $G(X)$ is defined to be the space generated by all such
functions $\theta(g,\phi,f)$. We consider the tower of theta lifts to
$G(X_a)$ for varying $a$'s. We can also lift from $G(X)$ to $G(Y_a)$
for $\pi$ an irreducible cuspidal automorphic representation of
$G(Y)$. Note that as the Weil representation depends on the additive
character $\psi$, the theta lifts depend on $\psi$, but we suppress it
from notation in this section. We have also suppressed the dependency
on $\chi_1$ and $\chi_2$. Since the dimensions of $X_a$ and $X$ are of the same
parity we can use the same $\chi_2$ for the splittings over $G
(Y)$. Implicitly we also need an embedding of $G (Y) \rightarrow \Mp
(Y\otimes \ell_a)$ and  the trivial character is used to determine
it.


We will extend the computation in \cite{rallis84:_howe_dualit_conjec}
to the unitary case. Our computation is simpler and more uniform. It
follows the spirit of the local computation of covariants in
\cite[Chap. 3. V]{MR1041060}. Since the isometry group of a Hermitian
space can also be regarded as the isometry group of a skew-Hermitian
space via a (non-canonical) isomorphism which depends on $\delta\in
E$ (c.f. Sec.~\ref{sec:split_meta}), we just need to demonstrate the tower property in one
direction. 
Let $Q_a$ be the parabolic subgroup of $G(X_a)$ stabilising $\ell_a^-$
which is an isotropic subspace of $X_a$. Similarly define $R_a$ to be
the parabolic subgroup of $G(Y_a)$ stabilising $\ell_a^+$, an
isotropic subspace of $Y_a$.

\begin{prop}\label{prop:tower_property}
Let $\pi$ be an irreducible cuspidal automorphic representation of $G(Y)$. 
Let $\phi \in \cS((Y\otimes X)^+\oplus Y^a)(\A)$. Then for $f\in \pi$ and $g\in G(X)(\A)$, the constant term along $Q_a$ of the theta lift is equal to
    \begin{equation}
      [\theta_{X_a,Y}(g,\phi,f)]_{Q_a} = \theta_{X,Y}(g,\phi(\cdot,0),f).
    \end{equation}
    Hence as $G (X)$-representations
\begin{equation}
\Res_{G(X)}(\theta_{Y}^{X_a}(\pi)_{Q_a}) = \theta_{Y}^{X}(\pi).
     \end{equation}
The action of $m(\GL_a (\A_E) ,1)$ on $\theta_{Y}^{X_a}(\pi)_{Q_a}$ is given by the character
\begin{equation}
  \chi_1\circ \det |\det |_{\A_E}^{\dim Y /2}.
\end{equation}
There is also a symmetric version for lifts from $G(X)$ to $G(Y_a)$.
\end{prop}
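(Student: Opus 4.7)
The plan is to use the mixed model of the Weil representation $\omega_{X_a,Y}$ adapted to the decomposition $X_a=\ell_a^+\oplus X\oplus\ell_a^-$ from Section~\ref{sec:mixed_model}, so that $\cS((Y\otimes X)^+\oplus Y^a)$ is realised as $\cS((Y\otimes X)^+)\,\hat\otimes\,\cS(Y\otimes\ell_a^+)$, keeping the $G(X)\times G(Y)$ structure on the first factor visible. This is precisely what lets us avoid completely linearising the Weil representation, unlike in \cite{rallis84:_howe_dualit_conjec}. By Section~\ref{sec:structure_unipotent} the unipotent radical is $N_{Q_a}=\{n(\mu,\beta):\mu\in\Hom_E(\ell_a^+,X),\ \beta\in\Her_a\}$, a standard unipotent on which the splitting to $\Mp$ is trivial. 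For a pure tensor $\phi=\phi_0\otimes\phi_1$, the factorisation $n(\mu,\beta)=n(\mu,0)\,n(0,\beta)$ together with \eqref{eq:action_n_mu_zero_dual_red} and the middle formula of \eqref{eq:weil_rep} describes $\omega(n(\mu,\beta))\phi$ explicitly: $n(0,\beta)$ contributes the character $\psi_E(\tfrac12\tr(\form{y}{y}_Y\beta))$ on $\phi_1(y)$, while $n(\mu,0)$ acts by the Heisenberg translation $\rho_0((\mathbf{1}_Y\otimes\mu(y),0))$ on $\phi_0$.

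Substituting this into $[\theta_{X_a,Y}(\cdot,\phi,f)]_{Q_a}(g)$ and exchanging the unipotent integral with the theta sum and the $[G(Y)]$ integral, I would first carry out $\int_{[\Her_a]}d\beta$. Nondegeneracy of $\psi_E$ forces the sum over $y=(y_1,\ldots,y_a)\in Y^a(k)$ to collapse to $\{y:\form{y}{y}_Y=0\}$, i.e.\ tuples spanning a totally isotropic subspace $Y_1\subseteq Y$. Parametrising $\mu$ by $(x_1,\ldots,x_a)\in X^a$, the remaining $\mu$-integral then becomes a Heisenberg translation of $\phi_0$ along $\sum_i y_i\otimes x_i\in Y_1\otimes X$; because $Y_1$ is totally isotropic, $Y_1\otimes X$ is isotropic in $Y\otimes X$, so successive Heisenberg operators commute and no cocycle arises.

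For $y=0$, $\rho_0$ is the identity, the $\mu$-integral is trivial, and after pairing with $f$ one obtains exactly $\theta_{X,Y}(g,\phi(\cdot,0),f)$. For $y\neq 0$, I would reorganise the sum by the $G(Y)(k)$-orbit of $y$, equivalently by a pair $(Y_1,\ell_a^+\twoheadrightarrow Y_1)$, and unfold against the $[G(Y)]$ integral. This rewrites the $G(Y)$ integral as one over the parabolic $P_{Y_1}\subseteq G(Y)$ stabilising $Y_1$, inside which the $\mu$-integration manifests as the constant term of $f$ along the unipotent radical of $P_{Y_1}$; cuspidality of $\pi$ kills this term. The main obstacle is precisely this last identification: one must match the Heisenberg phase from $\rho_0$ and the change of variable $h\mapsto uh$ on $G(Y)$ so that the $\mu$-integral really becomes a cuspidal constant term of $f$. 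The isotropy of $Y_1\otimes X\subseteq Y\otimes X$ is what guarantees no metaplectic cocycle disturbs the match, and this is the simplification the mixed model affords over \cite{rallis84:_howe_dualit_conjec}.

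Finally, the Levi action is immediate from the first line of \eqref{eq:weil_rep}: $m(\alpha,1)$ with $\alpha\in\GL_a(\A_E)$ acts only on the $\cS(Y\otimes\ell_a^+)\cong\cS(Y^a)$ factor, by $\phi_1(\cdot)\mapsto \chi_1(\det\alpha)|\det\alpha|_{\A_E}^{\dim Y/2}\phi_1(\cdot\alpha)$, and evaluating at $y=0$ extracts the claimed scalar character $\chi_1\circ\det\cdot|\det|_{\A_E}^{\dim Y/2}$. The symmetric statement for lifts from $G(X)$ to $G(Y_a)$ follows from the $\delta$-twist identification $G(Y)=G(Y^\delta)$ recalled at the end of Section~\ref{sec:split_meta}.
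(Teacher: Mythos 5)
Your proposal matches the paper's strategy in outline: use the mixed model of Section~\ref{sec:mixed_model}, split $n(\mu,\beta)=n(\mu,0)n(0,\beta)$, do the $\beta$-integral first (which forces $\form{y}{y}_Y=0$), handle $y\neq 0$ via cuspidality, and read off the Levi action from \eqref{eq:weil_rep}. Where it goes astray is the key step.

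You claim the $\mu$-integration ``manifests as the constant term of $f$ along the unipotent radical of $P_{Y_1}$.'' That is not what happens, and the conflation hides a non-trivial step. In the paper two \emph{separate} mechanisms are at work for an orbit of rank $r\ge 1$. First, the $\mu$-integral acts purely on $\phi_0$: using the model $\cS\bigl((Y_{-r}\otimes X)^+\oplus\ell_r^-\otimes X\bigr)$, the Heisenberg translation $\rho_0(1_Y\otimes\mu(y_r),0)$ multiplies by $\psi_E(\tr\form{x}{\mu(y_r)}_X)$ on the $\ell_r^-\otimes X$ variable, and integrating over $\mu$ extracts the Fourier coefficient at $x=0$ (this uses $r\ge 1$ so that $\{\text{rows of }\mu(y_r):\mu\}$ spans $X$). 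The variable $f$ never enters this computation. Second, and independently, the stabiliser in $G(Y)(k)$ of the $\GL_a$-orbit of $y_r$ contains $U_{R_r}$, so after unfolding the $[G(Y)]$-integral one can peel off $\int_{[U_{R_r}]}f(nh)\,dn$, which vanishes by cuspidality of $\pi$. Your proposal collapses these into one, which both misdescribes the argument and skips the point that actually requires work: before one can extract the inner integral $\int_{[U_{R_r}]}f(nh)\,dn$, one must verify that the theta expression being paired with $f$ is left-invariant under all of $U_{R_r}(\A)$ (not merely under the rational points of a stabiliser). That invariance is exactly the content of the Lemma proved immediately after the Proposition — a genuine mixed-model computation applying \eqref{eq:weil_rep} and \eqref{eq:action_n_mu_zero_dual_red} to the $Y$-side as well. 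You flag ``this last identification'' as the main obstacle but misdiagnose it; the obstacle is not matching the $\mu$-phase to a constant term of $f$ (they live on different groups), but proving the adelic $U_{R_r}$-invariance that licences the constant-term extraction. Also, the paper parametrises orbits by the double action of $G(Y)(k)\times\GL_a(E)$, reducing to representatives $y_r$ indexed by rank; your parametrisation by $G(Y)(k)$-orbits $(Y_1,\ell_a^+\twoheadrightarrow Y_1)$ is workable but leaves you to separate out the $\GL_a$-sum by hand, which the paper absorbs cleanly into the Levi action.
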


\begin{proof}
To make notation less cluttered we omit writing $(k)$ for taking rational points.
We need to compute
\begin{equation}\label{eq:constant_term}
   \int_{[U_{Q_a}]}\int_{[G(Y)]} \theta_{X_a,Y}(ng,h,\phi)f(h)dhdn.
\end{equation}

Step 1. Consider first 
\begin{equation}\label{eq:int_UQa}
  \int_{[U_{Q_a}]} \theta_{X_a,Y}(ng,h,\phi) dn.
\end{equation}
The exchange of order of integration will be justified at the end of the computation.
Recalling the structure of $U_{Q_a}$ (c.f. \eqref{eq:def_n_mu_beta}) we find that \eqref{eq:int_UQa} is equal to
\begin{align}
&\int_{[\Hom_E(\ell_a^+, X)]}\int_{\beta \in [\Her_a]} \theta_{X_a,Y}(n(\mu,\beta)g,h,\phi) d\beta d\mu\\
  =&\int_{[\Hom_E(\ell_a^+, X)]}\int_{\beta \in [\Her_a]} \sum_{z \in
    (Y\otimes X)^+}\sum_{y \in Y^a}
  \omega_{X_a,Y}(n(\mu,\beta)g,h)\phi(z,y) d\beta d\mu.
\end{align}
  Without loss
of generality we assume $\phi = \phi_0 \otimes \phi_1$ with $\phi_0
\in S_{X,Y}$ and $\phi_1 \in \cS(Y\otimes \ell_a^+)$. By
\eqref{eq:weil_rep} we write out the action of  $\beta$ to get
\begin{align}
   &\int_{[\Hom_E(\ell_a^+, X)]}\int_{\beta \in [\Her_a]} \sum_{z \in
    (Y\otimes X)^+}\sum_{y \in Y^a}
  \omega_{X_a,Y}(n(\mu,0)g,h)\phi_0\otimes\phi_1(z,y) \psi_E(\tr(\form{y}{y}_Y\beta))d\beta d\mu
\end{align}
so the integration vanishes unless $\form{y}{y}_Y=0$ and we get
\begin{align}\label{eq:r=0_refer_back}
  &\int_{[\Hom_E(\ell_a^+, X)]} \sum_{z \in
     (Y\otimes X)^+}\sum_{\substack{y \in Y^a\\ \form{y}{y}_Y=0 }}
  \omega_{X_a,Y}(n(\mu,0)g,h)\phi_0\otimes\phi_1(z,y) d\mu.
\end{align}
The condition $\form{y}{y}_Y=0$ means that the columns of $y\in Y^a$ span an
isotropic subspace of $Y$. We have identified $Y\otimes \ell_a^+$ with $Y^a$.

Step 2. We will show that only $y=0$ contributes to the above integral. Note that
if $Y$ is anisotropic we are done now. Decompose $Y$ as
$\ell_b^+\oplus Y_0 \oplus \ell_b^-$ with $Y_0$ anisotropic. Choose  dual bases for $\ell_b^+$ and   $\ell_b^-$
and denote the elements by $e_1,e_2,\ldots,e_b$ and $e_{-1},e_{-2},\ldots,e_{-b}$ respectively. 
We consider the orbits of $y$ under the action of $G(Y) (k)\times \GL_a (E)$. To alleviate notation $(k)$ and $(E)$ are dropped afterwards. We will split the integral \eqref{eq:r=0_refer_back} according to the orbits and analyse them one  at a time.
The orbits are parametrised by the rank $r$ of $y$
and we can choose 
\begin{equation}
  \label{eq:def_yr}
  y_r:=(e_1,e_2,\ldots,e_r,0,\ldots, 0)
\end{equation}
 as representatives
of the orbits for $r$ running from $0$ to $\min (a,b)$.

Then \eqref{eq:r=0_refer_back} is equal to
 \begin{align}
   & \sum_{r=0}^{\min(a,b)} \int_{[\Hom_E(\ell_a^+, X)]} \sum_{z \in
    (Y\otimes X)^+} \sum_{(\gamma,\nu) }
  \omega_{X_a,Y}(n(\mu,0)g,h)\phi_0\otimes\phi_1(z,\gamma^{-1} y_r \nu) d\mu
 \end{align}
 where $(\gamma,\nu)$ runs over $ \Stab_{G(Y)\times \GL_a} y_r\lmod
 (G(Y)\times \GL_a)$. By the explicit description of Weil representation the above is equal to
 \begin{align}
   & \sum_{r=0}^{\min(a,b)} \int_{[\Hom_E(\ell_a^+, X)]} \sum_{z \in
    (Y\otimes X)^+} \sum_{(\gamma,\nu) }
  \omega_{X_a,Y}(m(\nu,\mathbf{1})n(\mu,0)g,\gamma h)\phi_0\otimes\phi_1(z, y_r) d\mu\\
\label{eq:int_orbits}
   =& \sum_{r=0}^{\min(a,b)} \int_{[\Hom_E(\ell_a^+, X)]} \sum_{z \in
    (Y\otimes X)^+} \sum_{(\gamma,\nu)}
  \omega_{X_a,Y}(n(\mu,0)m(\nu,\mathbf{1})g,\gamma h)\phi_0\otimes\phi_1(z, y_r) d\mu.
 \end{align}
The above equality holds because
 \begin{equation*}
   \sum_{z \in (Y\otimes X)^+}
 \omega_{X,Y}(g,\gamma h)\phi_0(z) = \sum_{z \in (Y\otimes X)^+}
 \omega_{X,Y}(g, h)\phi_0(z).
 \end{equation*}
  and because exchanging
 $m(\nu,\mathbf{1})$ and $n(\mu,0)$ does not change the value of the
 integral.

Step 3. Now we focus on the subintegral of \eqref{eq:int_orbits} for a fixed $r \ge 1$.
Using \eqref{eq:action_n_mu_zero_dual_red}, for each $r$ we get:
\begin{align}
&\int_{[\Hom_E(\ell_a^+, X)]} \sum_{z \in    (Y\otimes X)^+} 
\sum_{(\gamma,\nu) }
\omega_{X_a,Y}(n(\mu,0)m(\nu,\mathbf{1})g,\gamma h) (\phi_0\otimes\phi_1)(z, y_r) d\mu\\
\label{eq:integrating_mu}
=&\int_{[\Hom_E(\ell_a^+, X)]} \sum_{z \in    (Y\otimes X)^+} 
\sum_{(\gamma,\nu) }
\rho_0(1_Y\otimes \mu(y_r),0)\omega_{X_a,Y}(m(\nu,\mathbf{1})g,\gamma h) (\phi_0\otimes\phi_1)(z, y_r) d\mu.
\end{align}
The representation $\rho_0$ of the Heisenberg group $H(Y\otimes X)$
acts only on the $\cS((Y\otimes X)^+)$-part. Let $\ell_r^\pm$ denote
the span of columns of $y_r$. These are subspaces of $\ell_b^\pm$. Let $Y_{-r}$ denote the orthogonal
complement of $\ell_r^\pm$ in $Y$. Now we
use a more concrete model $\cS((Y_{-r}\otimes X)^+ \oplus
\ell_r^-\otimes X)$ for $\rho_0$. For $\phi_{00}\in \cS((Y_{-r}\otimes
X)^+)$ and $\phi_r\in \cS(\ell_r^-\otimes X)$, $\rho_0$ acts as
\begin{align}
   \rho_0(1_Y\otimes \mu(y_r),0) (\phi_{00}\otimes \phi_r)(z_0,x) =& \rho_r(1_{\ell_r}\otimes \mu(y_r),0)\phi_r(x) \cdot \phi_{00}(z_0)\\
=& \psi_E( \tr(\form{x}{\mu(y_r)}_X)) \phi_r(x)\phi_{00}(z_0)
\end{align}
where $\rho_r$ is the representation of the Heisenberg group $H(\ell_r\otimes X)$. The
above equation holds because the action of $1_Y\otimes \mu(y_r)$ concentrates in the $\phi_r$-part
as $1_{\ell_r}\otimes \mu(y_r)$ and in the final line we let
$\mu$ acts on $y_r$ row-wise by abuse of notation. Applying this the
integral \eqref{eq:integrating_mu} becomes
\begin{multline}\label{eq:int_against_mu}
  \int_{[\Hom_E(\ell_a^+, X)]} \sum_{z_0 \in    (Y_{-r}\otimes X)^+} \sum_{x\in \ell_r^-\otimes X}
\sum_{(\gamma,\nu) } \\
\psi_E(\tr\form{x}{\mu(y_r)}_X)\omega_{X_a,Y}(m(\nu,\mathbf{1})g,\gamma h) (\phi_0\otimes\phi_1)(z_0,x, y_r) d\mu.
\end{multline}
Thus for a fixed $x$ the integration against $\mu$ vanishes unless the space spanned
by the rows of $x$ is orthogonal to the space spanned by the rows of
$\mu(y_r)$ for all $\mu$. Note that $y_r \in \ell_{r,Y}^+\otimes \ell_{a,X}^+$ and thus $\mu(y_r)\in \ell_r^+\otimes X$. 
 Since we are in the case $r\ge 1$  the space spanned by the rows of
$\mu(y_r)$ for all $\mu$ is the whole of $X$. Thus only $x=0$ contributes and \eqref{eq:int_against_mu} becomes
\begin{align}
   \sum_{z_0 \in    (Y_{-r}\otimes X)^+} 
  \sum_{(\gamma,\nu) } \omega_{X_a,Y}(m(\nu,\mathbf{1})g,\gamma h) (\phi_0\otimes\phi_1)(z_0,0, y_r) .
\end{align}

Step 4. Still in the case $r\ge 1$, we multiply the above by $f(h)$ and integrate over $h$. Note that the sum
over $(\gamma,\nu)\in\Stab_{G(Y)\times \GL_a} y_r\lmod (G(Y)\times
\GL_a)$ can be written as the double sum over $\nu \in S_1:=\Stab_{\GL_a}y_r
\lmod \GL_a$ and $\gamma$ in the stabiliser $S_2$ in $G(Y)$ of the $\GL_a$-orbit of $y_r$. Obviously this
contains the unipotent subgroup $U_{R_r}$ of $G(Y)$ for $r\ge 1$. Thus  we get
\begin{align*}
  &\int_{[G(Y)]} \sum_{z_0 \in (Y_{-r}\otimes X)^+}
  \sum_{(\gamma,\nu) } \omega_{X_a,Y}(m(\nu,\mathbf{1})g,\gamma
  h) (\phi_0\otimes\phi_1)(z_0,0, y_r) f(h) dh\\  
  =& \int_{S_2 (k)\lmod G(Y)(\A)} \sum_{z_0 \in (Y_{-r}\otimes X)^+}
  \sum_{\nu
    \in \Stab_{\GL_a}y_r \lmod \GL_a } \\
  &\qquad \omega_{X_a,Y}(m(\nu,\mathbf{1})g, h) (\phi_0\otimes\phi_1)(z_0,0,
  y_r) f(h) dh.
\end{align*}

The expression
\begin{align}
  \begin{split}
     h \mapsto \sum_{z_0 \in (Y_{-r}\otimes X)^+}
   \sum_{\nu \in
    \Stab_{\GL_a}y_r \lmod \GL_a } 
  \omega_{X_a,Y}(m(\nu,\mathbf{1})g, h) (\phi_0\otimes\phi_1)(z_0,0, y_r) 
  \end{split}
\end{align}
is by the following lemma invariant under all $n(\mu',\beta')\in
U_{R_r}(\A)$ for $\mu' \in \Hom_E(\ell_r^+, Y_{-r})$ and $\beta' \in
\Hom_E(\ell_r^+,\ell_r^-)$ such that $\beta' {}^*=-\beta'$.  Thus we can
decompose the integration over $h$ to get an inner integral
\begin{align}
  \int_{[U_{R_r}]}f(nh)dn
\end{align}
and this vanishes because $f$ is cuspidal.  Thus only for $r=0$ does
the subintegral not necessarily vanish. 

Step 5. Consider the case $r=0$ i.e. the orbit containing the single element $y=0$. 
Setting $y=0$ in \eqref{eq:r=0_refer_back} and restricting to $g\in G(X)(\A)$ we are left
with
\begin{align}
  &\int_{[\Hom_E(\ell_a^+, X)]} \sum_{z \in    (Y\otimes X)^+} 
  \omega_{X_a,Y}(n(\mu,0)g,h) (\phi_0\otimes\phi_1)(z,0) d\mu\\
  =& \sum_{z \in    (Y\otimes X)^+}   \omega_{X_a,Y}(g,h) (\phi_0\otimes\phi_1)(z,0) \\
=& \theta_{X,Y}(g,h,\phi_0\otimes\phi_1(\cdot,0)).
\end{align}
Finally we integrate over $[G (Y)]$ against $f (h)$ to get $\theta_{X,Y}(g,\phi_0\otimes\phi_1(\cdot,0),f)$ as required. The change of order in integration is justified by the absolute convergence of the integral. 

The action of $m (\GL_a (\A_E),1)$ follows simply from the explicit formulae of Weil representation.
\end{proof}

\begin{lemma}
  Let $a\ge 1$ and $b\ge 1$. Assume $X=\ell_{a,X}^+\oplus X_{-a}
  \oplus \ell_{a,X}^-$ and $Y=\ell_{b,Y}^+\oplus Y_{-b} \oplus
  \ell_{b,Y}^-$. Suppose the Weil representation $\omega_{X,Y}$ of
  $G(Y) \times G(X)$ is realised on $S:=\cS( (Y_{-b} \otimes X_{-a})^+
  \oplus (\ell_{b,Y}^- \otimes X_{-a}) \oplus (Y\otimes
  \ell_{a,X}^+))$. Then for $\phi \in S$
  \begin{equation}
    \omega_{X,Y}(1,n_Y(\mu,\beta))\phi(z_0,0,y_b) =  \phi  (z_0,0,y_b)
  \end{equation}
where $y_b$ is as in \eqref{eq:def_yr}.
\end{lemma}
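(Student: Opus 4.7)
The plan is to factor $n_Y(\mu,\beta) = n_Y(\mu,0)\,n_Y(0,\beta)$ (a direct matrix check) and show each factor acts trivially on $\phi$ at $w^+_0 := (z_0, 0, y_b)$. Since the splitting of the metaplectic cover is trivial over the unipotent radical $U_{R_b}$, the Weil representation is multiplicative on this factorisation, so $\omega_{X,Y}(1,n_Y(\mu,\beta))$ reduces to $r(\widetilde{n})$ for the natural linear map $\widetilde{n} := n_Y \otimes 1_X$ on $W = Y \otimes X$.

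First I would verify that $\widetilde{n}$ preserves the Lagrangian complement $W^- := (Y_{-b}\otimes X_{-a})^- \oplus (\ell_{b,Y}^+ \otimes X_{-a}) \oplus (Y \otimes \ell_{a,X}^-)$ in both cases. For $n_Y(0,\beta)$ this is immediate, since $\ell_{b,Y}^+$ and $Y_{-b}$ are fixed pointwise and $\ell_{a,X}^-$ is preserved via the tensor factor. For $n_Y(\mu,0)$, the only nontrivial effect is that $\mu : Y_{-b} \to \ell_{b,Y}^+$ sends $(Y_{-b}\otimes X_{-a})^-$ into $(Y_{-b}\otimes X_{-a})^- + \ell_{b,Y}^+ \otimes X_{-a}$, which still lies in $W^-$. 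Hence $c = 0$ in the block decomposition $g = \smatrix{a}{b}{c}{d}$ of Sec.~\ref{sec:meta_rep}, and the Weil representation formula collapses to $(\omega(g)\phi)(w^+) = \psi(\tfrac{1}{2}\form{w^+a}{w^+b})\,\phi(w^+a)$.

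Next I would evaluate $w^+_0 a$ and $w^+_0 b$ directly. Since $y_b$ has columns in $\ell_{b,Y}^+$ (fixed by both factors) and the middle coordinate of $w^+_0$ is $0$, direct unfolding gives $w^+_0 a = w^+_0$ in each case. For $n_Y(0,\beta)$, the block $b$ vanishes on $(Y_{-b}\otimes X_{-a})^+$ and $Y\otimes\ell_{a,X}^+$ and acts on $\ell_{b,Y}^-\otimes X_{-a}$ via $\beta \otimes 1$, so $w^+_0 b = 0$ at middle coordinate zero. For $n_Y(\mu,0)$, $b$ sends $(Y_{-b}\otimes X_{-a})^+$ into $\ell_{b,Y}^+\otimes X_{-a}$ via $\mu \otimes 1$, giving $w^+_0 b = (\mu\otimes 1)(z_0)$; the pairing $\form{w^+_0 a}{w^+_0 b}$ nonetheless vanishes by the orthogonalities $Y_{-b}\perp \ell_{b,Y}^+$ in $Y$ and $\ell_{a,X}^+\perp X_{-a}$ in $X$, which annihilate the pairings of $(Y_{-b}\otimes X_{-a})^+$ and $Y\otimes\ell_{a,X}^+$ with $\ell_{b,Y}^+\otimes X_{-a}$. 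In both cases $\psi(\tfrac{1}{2}\form{w^+_0 a}{w^+_0 b}) = 1$ and $\phi(w^+_0 a) = \phi(w^+_0)$, proving the lemma. The hard part will be the block-by-block bookkeeping to confirm $c = 0$ and to extract $a,b$ for each factor in the tripartite polarisation; once this is laid out, the orthogonality conclusions are immediate from the Hermitian/skew-Hermitian form structure.
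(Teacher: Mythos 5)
Your proof is correct, and it takes a genuinely different route from the paper's. The paper reduces to pure tensors $\phi = \phi_1\otimes\phi_2\otimes\phi_3$ and invokes Prop.~\ref{prop:direct_sum_of_weil_rep} to factor the Weil representation across the three tensor slots $(Y_{-b}\otimes X_{-a})$, $(\ell_{b,Y}\otimes X_{-a})$, $(Y\otimes \ell_{a,X})$, then applies the explicit formulas \eqref{eq:weil_rep} and \eqref{eq:action_n_mu_zero_dual_red} slot by slot. You instead work directly in the Schr\"odinger model attached to the full Lagrangian $W^+ = (Y_{-b}\otimes X_{-a})^+\oplus(\ell_{b,Y}^-\otimes X_{-a})\oplus(Y\otimes\ell_{a,X}^+)$, observe that $\widetilde{n}$ stabilises the complementary Lagrangian $W^-$ so that $c=0$ in the block decomposition, and read off that the operator reduces to translation by $a$ plus a phase $\psi(\tfrac12\form{w^+a}{w^+b})$ which vanishes at $w_0^+$ by the orthogonalities $Y_{-b}\perp\ell_{b,Y}^+$ and $\ell_{a,X}^+\perp X_{-a}$. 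The two approaches are morally the same computation, but yours is more elementary and avoids the need to reduce to pure tensors (the collapsed formula holds for arbitrary $\phi\in\cS(W^+)$); it also exposes the geometric reason for the triviality more directly. Two small points worth spelling out in a clean write-up: the absence of a Jacobian factor in the collapsed formula follows from $\det a = 1$ (since $\widetilde{n}$ is unipotent and $c=0$), and the cocycle $c_{W^-}(n(\mu,0),n(0,\beta))$ is trivial precisely because both factors stabilise $W^-$, which justifies the multiplicativity step.
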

\begin{proof}
  We only need to show the equality for $\phi= \phi_1\otimes \phi_2
  \otimes \phi_3$ for $\phi_1 \in \cS((Y_{-b} \otimes X_{-a})^+)$, $\phi_2
  \in \cS(\ell_{b,Y}^- \otimes X_{-a})$ and $\phi_3 \in (Y\otimes \ell_{a,X}^+)$.
We compute 
\begin{align}
  &  \omega_{X,Y}(1,n_Y(0,\beta))\phi_1\otimes \phi_2  \otimes \phi_3(z_0,0,y_b)\\
  =& \phi_1(z_0)
  \omega_{X_{-a},\ell_{b,Y}}(1,n_Y(0,\beta))\phi_2(0)
  \omega_{\ell_{a,X},Y}(1,n_Y(0,\beta))
  \phi_3(y_b).
\end{align}
By \eqref{eq:weil_rep} we find the above is equal to
$ \phi_1\otimes \phi_2  \otimes \phi_3(z_0,0,y_b)$.

Next we compute
\begin{align}
  &  \omega_{X,Y}(1,n_Y(\mu,0))\phi_1\otimes \phi_2  \otimes \phi_3(z_0,0,y_b)\\
  =& \omega_{X_{-a},Y}(1,n_Y(\mu,0))\phi_1\otimes \phi_2(z_0,0) \omega_{\ell_{a,X},Y}(1,n_Y(\mu,0))\phi_3(y_b)
\end{align}
By \eqref{eq:action_n_mu_zero_dual_red} and \eqref{eq:weil_rep} we get also $ \phi_1\otimes \phi_2  \otimes \phi_3(z_0,0,y_b)$.

\end{proof}

\section{Regularised Siegel-Weil Formula}
\label{sec:reg_sw}

\providecommand{\rU}{\r{U}} 
\providecommand{\REG}{\r{REG}}
\providecommand{\abc}{\r{abc}}

To relate the theta lift space to Eisenstein series we recall the regularised Siegel-Weil
Formula for unitary groups from \cite{MR2064052}. We deviate from our usual notation.

Let $V$ be a Hermitian space of dimension $m$. Let $H=\rU(V)$ and
$G=\rU(n,n)$. Suppose $n<m\le 2n$ and $m-r \le n$. Then we define the
complementary space $V^c$ of $V$ as follows. Let $m^c=\dim V^c$. Then
$m+m^c=2n$ and $V^c$ is required to be in the same Witt tower as
$V$. Fix $K:=K_H$ a maximal compact subgroup of $H(\A)$ and $K_G$ a
maximal compact subgroup of $G(\A)$. In this section we use the
trivial character to split the metaplectic group over $H (\A)$ and  $\chi_1$   to
split the metaplectic group over $G (\A)$.

We consider the theta integral
\begin{equation}
  I(g,\phi) := \int_{[H]} \theta(g,h,\phi)dh.
\end{equation}
This may not be absolutely convergent. Let $\cS(V^n(\A))_\abc$ denote
the subspace of $\cS(V^n(\A))$ consisting of function $\phi$ such that
$I(g,\phi)$ is absolutely convergent for all $g$. This space is
nonempty. Then $I$ defines an $H(\A)$-invariant map
\begin{equation*}
  I: \cS(V^n(\A))_\abc \rightarrow \c{A}^\infty(G)
\end{equation*}
where $\c{A}^\infty(G)$ is the space of smooth automorphic forms on
$G(\A)$  without the
$K_G$-finiteness condition. Then Ichino\cite{MR2064052} showed that there exists a canonical extension of $I$:
\begin{prop}
  Assume $m\le n$. Then there exists a unique $H(\A)$-invariant
  extension $I_{\REG}$  of $I$ to $\cS(V^n(\A))$. More precisely, it is realised as
  \begin{equation}
    c_\alpha^{-1} \int_{[H]}\theta(g,h,\omega(\alpha)\phi) dh
  \end{equation}
where 
$\alpha$  is a suitable element in the Hecke algebra of
$G$. It can also be taken to be an element in the Hecke algebra of
$H$ which acts on the trivial representation of $H$ by the scalar
$c_\alpha$. 
\end{prop}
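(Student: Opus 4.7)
The plan is to follow the Kudla--Rallis regularisation strategy, adapted to the unitary dual pair by Ichino. The goal is to produce a single element $\alpha$ in the archimedean part of the Hecke algebra of $G$---equivalently, in the centre $\mathcal{Z}(\mathfrak{g}_\infty)$ of the universal enveloping algebra at an archimedean place---whose image $\omega(\alpha)$ carries every $\phi \in \cS(V^n(\A))$ into the absolutely convergent subspace $\cS(V^n(\A))_\abc$, and which acts on the trivial representation of $H$ by a nonzero scalar $c_\alpha$. Once such an $\alpha$ is in hand, one \emph{defines}
\begin{equation*}
I_{\REG}(g,\phi) := c_\alpha^{-1} \int_{[H]} \theta(g,h,\omega(\alpha)\phi)\,dh,
\end{equation*}
and verifies the remaining properties.

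For the construction of $\alpha$, I would analyse the obstruction to absolute convergence of $\int_{[H]} \theta(g,h,\phi)\,dh$ in terms of the $H$-orbit stratification of $V^n$. The divergent contributions come from a finite collection of $H(\A)$-invariant distributions supported on lower-rank strata, and via the seesaw pair $(G\times G,\, H\times H)$ each is an eigendistribution for $\mathcal{Z}(\mathfrak{g}_\infty)$ with an explicitly computable infinitesimal character $\lambda_i$. Picking a suitable $z \in \mathcal{Z}(\mathfrak{g}_\infty)$ and setting $\alpha := \prod_i (z - \lambda_i)$, where the product avoids the eigenvalue at the trivial representation, yields an element that annihilates every bad distribution while acting by a nonzero product $c_\alpha$ on the constant function. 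The equivalent realisation with $\alpha$ in the Hecke algebra of $H$ then follows by transferring through the joint action of $G\times H$ on the Weil representation, under which the two centres share a common Harish-Chandra homomorphism; concretely, the action of $\omega(z)$ on $\cS(V^n(\A))$ agrees with the action of some explicit $z^\vee \in \mathcal{Z}(\mathfrak{h}_\infty)$.

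The remaining properties of $I_{\REG}$ are then almost formal. The $H(\A)$-invariance is immediate since $\omega(\alpha)$ commutes with the $H(\A)$-action on $\cS(V^n(\A))$. Independence of the choice of $\alpha$ uses a common-refinement argument: for two valid choices $\alpha, \alpha'$, apply both simultaneously via $\alpha\alpha'$ and use the multiplicativity $c_{\alpha\alpha'} = c_\alpha c_{\alpha'}$, together with absolute convergence after applying either factor. The extension property on $\cS(V^n(\A))_\abc$ follows from Fubini combined with the fact that $\alpha$ acts on the $H$-invariant constant by the scalar $c_\alpha$, so the factor $c_\alpha^{-1}$ cancels. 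Uniqueness is the cleanest step: any two $H(\A)$-invariant extensions $I_{\REG}, I'_{\REG}$ agree on $\cS(V^n(\A))_\abc$, and since every $\phi$ satisfies $\omega(\alpha)\phi \in \cS_\abc$, $H(\A)$-invariance forces the difference $D$ to satisfy $D(g,\phi) = c_\alpha^{-1} D(g,\omega(\alpha)\phi) = 0$.

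The main obstacle, carrying essentially all of the content, is the construction and verification of $\alpha$ itself: one must identify the complete list of boundary distributions responsible for divergence and check that the chosen Hecke operator annihilates every one of them, without inadvertently killing the trivial representation. This is where the detailed analysis of the singularities of the associated Eisenstein series and the residue structure of the Siegel--Weil integral enters, and where the unitary case requires genuine adaptation of the symplectic arguments, since the orbit parameters, Witt towers and infinitesimal characters all must be recomputed in the Hermitian setting.
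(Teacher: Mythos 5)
The paper does not prove this proposition at all: it is quoted verbatim from Ichino \cite{MR2064052} (building on Kudla--Rallis), and the text that follows simply uses it. So there is no internal proof to compare against; what you have written is an attempt to reconstruct Ichino's argument. Your overall strategy is the correct one, and the formal deductions at the end are right. In particular, your uniqueness argument is exactly the standard one, and it correctly identifies why the second half of the statement (that $\alpha$ can be taken in the Hecke algebra of $H$) is essential: for an $H(\A)$-invariant functional $D$ one has $D(g,\omega(\alpha)\phi)=c_\alpha D(g,\phi)$ only when $\alpha$ acts through $H$, and this is what lets you conclude $D=c_\alpha^{-1}D\circ\omega(\alpha)$ vanishes on all of $\cS(V^n(\A))$ once it vanishes on $\cS(V^n(\A))_{\mathrm{abc}}$. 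The extension and invariance properties are likewise formal, as you say.

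The gap is that the entire analytic content --- producing $\alpha$ with $\omega(\alpha)\cS(V^n(\A))\subset\cS(V^n(\A))_{\mathrm{abc}}$ and $c_\alpha\neq 0$ --- is acknowledged but not carried out, and the mechanism you propose for it is not the one that works. The actual argument (Kudla--Rallis in the symplectic/orthogonal case, Ichino in the unitary case) does not proceed by listing ``divergent $H(\A)$-invariant distributions on lower-rank strata'' and killing their infinitesimal characters; rather, one estimates the theta kernel $\theta(g,h,\phi)$ as a function of $h$ on a Siegel domain of $[H]$ via its constant terms along the parabolic subgroups of $H$, identifies the slowly decreasing contributions (coming from vectors of non-maximal rank in $V^n$), and shows by direct computation that a suitable $\alpha$ --- taken either in $\mathcal{Z}(\mathfrak{g}_v)$ at a real place or in the spherical Hecke algebra at a good finite place, the latter being what is available for a general $E/k$ --- renders $\theta(g,h,\omega(\alpha)\phi)$ rapidly decreasing in $h$. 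Moreover, the transfer of $\alpha$ from the Hecke algebra of $G$ to that of $H$ is not a consequence of a ``shared Harish--Chandra homomorphism'' in any formal sense; it is a nontrivial input from the unramified (or infinitesimal) theta correspondence, namely an explicit algebra homomorphism $\mathcal{H}(G_v)\to\mathcal{H}(H_v)$ compatible with $\omega$, whose existence and the nonvanishing of $c_\alpha$ must both be verified in the Hermitian setting. As it stands your proposal establishes the easy implications but leaves the proposition's existence claim unproved; for the purposes of this paper the honest proof is the citation to Ichino.
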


Now to distinguish the regularised theta integral associated to
different groups we add in subscripts, so $I_{V,\REG}$ is what we call
$I_{\REG}$ above.

We also need the definition of the Siegel-Eisenstein series. First we
define the Siegel-Weil section associated to $\Phi\in \cS(V^n(\A))$. Let
$P$ be the Siegel parabolic subgroup of $G$ and  $N$ the unipotent part.
For $g\in G(\A)$ decompose $g$ as
$g=m(A)nk$ with $A\in\Res_{E/k} \GL_n(\A)$, $n\in N(\A)$ and $k\in K_G$. Set
$a(g) =\det A$ in any such decomposition of $g$ and then the quantity
$|a(g)|$ is well-defined.  The Siegel-Weil section associated to
$\Phi\in\cS(U^n(\A))$ is defined to be
\begin{equation*}
  F_\Phi (g,s) = | a(g)|^{s-s_0}\omega(g)\Phi(0),
\end{equation*}
where $s_0=(m-n)/2$. This is a section in the induced representation
$\Ind_{P(\A)}^{G(\A)} \chi_1|\; |^{s}$.

It is useful to also note the
local version.  We let $R_n(V_v)$ denote the set of sections $g\mapsto
\omega(g)\Phi(0)$ inside $\Ind_{P_v}^{G_v} \chi_1|\; |^{s_0} $.

Returning to the global case, form the
Siegel-Eisenstein series
\begin{equation}
  E(g,s,F_\Phi) = \sum_{\gamma\in P(k)\lmod G(k)} F_\Phi(\gamma g, s).
\end{equation}
It is absolutely convergent for $\Re s >>0$. For   a
$K_G$-finite element $\Phi$ in $\cS(V^n(\A))$,  $E(g,s,F_\Phi)$ has meromorphic continuation
to the whole complex plane.

Then  the regularised Siegel-Weil formula says
\begin{thm}\emph{\cite[Thm.~4.1]{MR2064052}}
  Suppose $m >n$. Let $\Phi$ be a $K_G$-finite element in
  $\cS(V^n(\A))$. Then
  \begin{equation}
    \label{eq:reg_sw}
    \Res_{s=\frac{m-n}{2}}E(g,s,F_\Phi) = c_K I_{V^c,\REG}(g,\pi_V^{V^c}\pi_K\Phi).
  \end{equation}
\end{thm}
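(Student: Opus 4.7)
The plan is to follow the Kudla-Rallis regularisation strategy, adapted to unitary dual pairs. First I would check that $I_{V^c,\REG}$ is well defined and $H(V^c)(\A)$-invariant. Since $m^c=2n-m<n$ when $m>n$, the theta integral on $V^c$ diverges; one exhibits an element $\alpha$ in the Hecke algebra of $G$ (or dually of $H(V^c)$) that acts by a nonzero scalar $c_\alpha$ on the trivial representation and pushes $\omega(\alpha)\Phi$ into $\cS((V^c)^n(\A))_{\abc}$. Independence of the resulting integral from $\alpha$ is the initial technical check.

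Next, on the Eisenstein side I would analyse the pole of $E(g,s,F_\Phi)$ at $s_0=(m-n)/2$. Since $m>n$ we lie outside the absolute convergence range at $s_0$, so the order and residue must be read off from the intertwining operator associated to the Siegel parabolic $P$. A local calculation in the degenerate principal series $\Ind_{P_v}^{G_v}\chi_{1,v}|\ |^{s}$, together with the $R_n(V_v)$ structure recalled above, shows that the residue is simple and lies in the irreducible constituent realised by the local theta lift from $V^c$.

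The main step is then to match the two sides. I would use a see-saw / doubling argument: pair both sides against the trivial representation of $H(V^c)$ through a second theta kernel on $V^c$, so that a global inner product identity reduces the required equality to an identity of local zeta integrals at each place. Meromorphic continuation in $s$ and density of $K_G$-finite sections of the form $F_\Phi$ in the degenerate principal series then produce the identity for general $\Phi$. The scalar $c_K$ emerges from local Haar measure normalisations together with the local Weil indices appearing in Sec.~\ref{sec:split_meta}.

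The main obstacle will be the analytic bookkeeping around the pole: one must verify that the pole at $s_0$ is at most simple and that the residue map intertwines with the projections $\pi_V^{V^c}$ and $\pi_K$ in precisely the way needed to cancel the dependence on $\alpha$. For non-split $V$ the local constituents of $R_n(V_v)$ can be reducible, so tracking which summand the residue lands in, rather than the global spectral argument itself, is where the real subtlety lies.
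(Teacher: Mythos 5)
The statement you are trying to prove is not proved in this paper at all: it is cited verbatim as Theorem~4.1 of Ichino's paper \cite{MR2064052} (\emph{A regularized Siegel-Weil formula for unitary groups}), and the present article simply quotes it as known input. So there is no ``paper's own proof'' to compare against here; the paper's contribution at this point is only to set up notation ($\pi_V^{V^c}$, $\pi_K$, $I_{V^c,\REG}$, the Siegel-Weil section $F_\Phi$) so that the citation can be used later, in particular in Corollary~\ref{cor:SW} and in the proof of Proposition~\ref{prop:theta_is_Eis}.

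That said, a few remarks on your sketch as a reconstruction of Ichino's argument. Your opening step (existence and uniqueness of the regularised theta integral via a Hecke operator $\alpha$ killing divergence and acting by a scalar on the trivial representation) is indeed the Kudla--Rallis mechanism that Ichino transports to the unitary case, and your instinct to prove well-definedness independent of $\alpha$ first is correct. However, the ``see-saw / pair against a second theta kernel on $V^c$'' step is not how the identity \eqref{eq:reg_sw} is established: the comparison is made by computing constant terms of both sides along the Siegel parabolic and using the explicit intertwining-operator description of the residue of $E(g,s,F_\Phi)$, matched against a corresponding expansion of the regularised theta integral via the spectral/orbit decomposition, rather than via a global inner-product identity reduced to local zeta integrals. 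The projection $\pi_V^{V^c}$ appears precisely because the residue of the intertwining operator realises the ``partial Fourier transform / restriction'' from $\cS(V^n)$ to $\cS((V^c)^n)$, and $\pi_K$ appears to make the regularised integral $K$-invariant so that the two sides live on the same space; your proposal does not explain how either projection arises, and this is exactly the place where a naive doubling argument would lose track of the constant $c_K$ and of which constituent of $R_n(V_v)$ the residue lands in. If you genuinely need a proof, the right move is to read Ichino's Theorem~4.1 directly rather than re-derive it; the present paper treats it as a black box.
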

\begin{rmk}
  Here $c_K$ is a constant depending only on $K$. For the definition
  of $\pi_V^{V^c}\pi_K$ please see \eqref{eq:defn_pi_QQ_pi_K}. It
  sends $\Phi$ to a function in $\cS(V^{c,n}(\A))$.  
\end{rmk}
\begin{cor}
\label{cor:SW}
  Suppose $m^c =\dim V^c <n$. For any  $K_G$-finite element $\Phi^c$
  in $\cS((V^c)^n(\A))$, there exists a $K_G$-finite element $\Phi$ in
  $\cS(V^n(\A))$ such that the following holds:
  \begin{equation}
    \label{eq:reg_sw_cor}
    I_{V^c,\REG}(g,\Phi^c)=\Res_{s=\frac{n-m^c}{2}}E(g,s,F_\Phi).
  \end{equation}
\end{cor}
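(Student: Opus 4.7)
The plan is to deduce Corollary \ref{cor:SW} from the regularised Siegel--Weil formula \eqref{eq:reg_sw} by interchanging the roles of $V$ and $V^c$. Given $V^c$ with $m^c < n$, I would set $m := 2n - m^c$, so that $m > n$, and let $V$ denote the unique member of the Witt tower of $V^c$ of dimension $m$. Then $V^c$ is by construction the complementary space of this $V$, and
\[
\frac{m-n}{2} = \frac{n-m^c}{2},
\]
so \eqref{eq:reg_sw} applied to $V$ asserts that for every $K_G$-finite $\Phi \in \cS(V^n(\A))$,
\[
\Res_{s=\frac{n-m^c}{2}} E(g,s,F_\Phi) = c_K\, I_{V^c,\REG}\bigl(g,\pi_V^{V^c}\pi_K\Phi\bigr).
\]

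Consequently the corollary reduces to the surjectivity statement that every $K_G$-finite $\Phi^c \in \cS((V^c)^n(\A))$ can be written as $c_K \pi_V^{V^c}\pi_K\Phi$ for some $K_G$-finite $\Phi \in \cS(V^n(\A))$; substituting any such preimage into the previous display then yields \eqref{eq:reg_sw_cor}. To establish the surjectivity, I would exploit the decomposition $V \cong V^c \oplus W$ with $W$ a sum of hyperbolic planes, which is possible because $V$ and $V^c$ lie in the same Witt tower with $m > m^c$. This induces a tensor factorisation $\cS(V^n(\A)) \cong \cS((V^c)^n(\A)) \mathbin{\widehat{\otimes}} \cS(W^n(\A))$. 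On a pure tensor $\Phi = \Phi^c \otimes \Phi_W$, the explicit description of $\pi_V^{V^c}\pi_K$ from \cite{MR2064052} should reduce to $\pi_V^{V^c}\pi_K \Phi = \lambda(\Phi_W)\cdot \Phi^c$ for a certain linear functional $\lambda$ on $\cS(W^n(\A))$ built from the zero-evaluation on the $W$-variables together with the $K_G$-averaging; choosing any $K_G$-finite $\Phi_W$ with $\lambda(\Phi_W) \neq 0$ and rescaling then produces the desired $\Phi$.

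The main obstacle is the existence of a $K_G$-finite $\Phi_W \in \cS(W^n(\A))$ with $\lambda(\Phi_W)\neq 0$. This is a purely local non-vanishing assertion: at the archimedean places it reduces to a computation with a Gaussian (possibly multiplied by a polynomial, to ensure the required $K_G$-type survives the projection $\pi_K$), and at the finite places it is handled by taking $\Phi_W$ to be the characteristic function of a suitably small lattice. Once this is in hand, setting $\Phi := (c_K\lambda(\Phi_W))^{-1}\,\Phi^c \otimes \Phi_W$ produces a $K_G$-finite element of $\cS(V^n(\A))$ with $c_K \pi_V^{V^c}\pi_K\Phi = \Phi^c$, completing the proof.
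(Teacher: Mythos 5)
Your reduction of the corollary to a surjectivity statement is the wrong strategy, and the surjectivity you need is actually false. The projection $\pi_K$ in the formula is the average over $K = K_H$, a maximal compact of the isometry group $H(\A)=\rU(V)(\A)$ acting on the vector variable (not $K_G$); so $\pi_V^{V^c}\pi_K\Phi$ is automatically invariant under $K_H\cap \rU(V^c)(\A)$, and the image of $\pi_V^{V^c}\pi_K$ is therefore a proper subspace of $\cS((V^c)^n(\A))$. In particular you cannot produce every $\Phi^c$ as $c_K\pi_V^{V^c}\pi_K\Phi$. Moreover, the factorisation $\pi_V^{V^c}\pi_K(\Phi^c\otimes\Phi_W)=\lambda(\Phi_W)\,\Phi^c$ that you posit does not hold: while $\pi_V^{V^c}$ respects the splitting $V\cong V^c\oplus W$, the operator $\pi_{K_H}$ averages over a group that mixes the $V^c$ and $W$ coordinates, so the pure-tensor decomposition is destroyed before $\pi_V^{V^c}$ is applied.

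The argument the paper gives avoids this trap by aiming for something weaker than surjectivity. It observes that the regularised theta integral $I_{V^c,\REG}(g,\cdot)$ is $\rU(V_v^c)$-invariant at each place $v$, hence factors through the local coinvariant quotient $\cS((V_v^c)^n)_{\rU(V_v^c)}$, which by the invariant-distribution theorem is identified with the space $R_n(V_v^c)$ of Siegel--Weil sections. The composite $F\circ\pi$ (Siegel--Weil section of $\pi\Phi_v$) is a nonzero $G_v$-equivariant map into $R_n(V_v^c)$, and since $m^c<n$ this representation is irreducible, so $F\circ\pi$ is onto $R_n(V_v^c)$. One then chooses $\Phi_v$ with $F_{\pi\Phi_v}=F_{\Phi^c_v}$; this equality in $R_n(V_v^c)$ forces $I_{V^c,\REG}(g,\pi\Phi)=I_{V^c,\REG}(g,\Phi^c)$ even though $\pi\Phi\neq\Phi^c$ as Schwartz functions. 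This is the essential idea your proof is missing: you do not need $\pi\Phi=\Phi^c$, only equality of their images in the quotient that the theta integral actually sees.
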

\begin{proof}
  We recall the definition of $\pi_V^{V^c}$ and $\pi_K$:
  \begin{align}\label{eq:defn_pi_QQ_pi_K}
    \pi_V^{V^c}\Phi(v^c) &= \int_{M_{r_0,n}(\A_E)}\Phi
    \begin{pmatrix}
      x \\ v^c \\0
    \end{pmatrix}
dx\\
\pi_K\Phi(v) &= \int_K \Phi(kv) dk. \nonumber
  \end{align}
Note that  $\pi_V^{V^c}$ is $\pi_Q^{Q'}$ in \cite{MR2064052}.
  To simplify notation let $\pi$ be the composite. We consider the
  local version. It is easy to see for almost all places $v$ of $k$ if
  we take $\Phi_v$ to be the characteristic function of the standard
  lattice in $V_v$ then $\pi_v\Phi_v$ is the characteristic
  function of the standard lattice in $V^c_v$.  Fix a place $v$ of $k$
  and for all other places $w$ fix $\Phi_w^{c,0} \in \cS((V_w^c)^n)$. Let
  $\Phi^c= \Phi^c_v \otimes (\otimes_{w\neq v}\Phi_w^{c,0})$. Consider the
  functional
\begin{equation}
  \ell(\Phi^c_v) := I_{V^c,\REG}(1,\Phi^c_v \otimes (\otimes_{w\neq
  v}\Phi_w^{c,0})).
\end{equation}
Here $1$ is the identity element of $G(\A)$. It is obviously
$\rU(V_v^c)$-invariant. Thus it factors through the
$\rU(V_v^c)$-coinvariant quotient of $\cS((V_v^c)^n)$.  Consider the commutative diagram
\begin{equation}
\newcommand{\rotsim}{
    \mathrel{\reflectbox{\rotatebox[origin=c]{90}{$\sim$}}}}
\xymatrix{
  &\cS(V_v^n) \ar[r]^-{\pi} &\cS((V_v^c)^n) \ar[rr]^-{\ell} \ar[rd]  \ar[rdd]_F & &\C  \\
  & & &\cS((V_v^c)^n)_{\rU(V_v^c)} \ar[ru] \ar[d]^\alpha_{\rotsim}  &  \\
  & & &R_n(V_v^c) \ar@{^{(}->}[r] &\Ind_{P_v}^{G_v}\chi_{1,v} |\ |^{\frac{m^c-n}{2}} &. } 
\end{equation}
By invariant distribution theorem, $\alpha$ is an isomorphism. It is
shown in \cite{MR2064052} that $F\circ \pi$ is $G_v$-equivariant. Also
since we assume $m^c < n$, $R_n(V_v^c)$ is irreducible. Thus $F\circ
\pi$ is a surjection. This means for any $\Phi^c_v \in \cS((V_v^c)^n)$
there exists a $\Phi_v \in \cS((V_v)^n)$ such that $F_{\Phi_v^c}=
F_{\pi\Phi_v}$. Thus $\ell\circ \pi(\Phi_v) = \ell(\Phi_v^c)$. Therefore
for any factorisable $\Phi^c\in \cS((V^c)^n(\A))$ we can find a $\Phi
\in \cS((V)^n(\A))$ such that $I_{V^c,\REG}(g,\Phi^c)=
I_{V^c,\REG}(g,\pi\Phi)$. Then the corollary follows from the previous
theorem.
\end{proof}

\section{Theta Correspondence}
\label{sec:theta_corr}

\subsection{Doubling Method}
\label{sec:doubling_method}

In this subsection we review the doubling method to prepare for the next subsection and set up some
notation. Let $X$ be an $\epsilon$-Hermitian space. It may not be
split. Let $X_a$ be as in Sec.~\ref{sec:notation} and let $X'$ be the
vector space that has the same underlying vector space as $X$ but with
the form $-\form{\ }{\ }_X$. We identify elements in $X$ and $X'$
naturally. Set $W=X\oplus X'$. Then there is a complete polarisation of
$W$ given by $W^+ = X^\Delta$ and $W^-= X^\nabla$ where
\begin{align*}
  X^\Delta = &\left\{ (x,x)\mid x \in X\right\};\\
  X^\nabla = &\left\{ (x,-x)\mid x \in X\right\}.
\end{align*}
Now consider the more general version. Let $W_a= X_a\oplus X'$. Then it has complete polarisation given by
$W_a^+=\ell_a^+ \oplus X^\Delta$ and $W_a^-=\ell_a^- \oplus X^\nabla$.


\subsection{Main Theorems}
\label{sec:main_thm}

Let $\pi$ be a cuspidal automorphic representation of $G(X)$. At this
point the additive character that figures in the Weil
representation becomes important, so it is put back in notation. We always use the character $\chi_1$ (resp. $\chi_2$)
to determine the splitting of metaplectic group over $G (X_a) (\A)$ (resp. $G
(Y_b) (\A)$). Let $\theta_{X,\psi}^Y(\pi)$ denote the theta lift of $\pi$.
The main theorems are as follows.

\begin{thm}\label{thm:theta_back_and_forth}
  Let $\pi$ be an irreducible cuspidal automorphic representation of
  $G(X)$. Assume that $\theta_{X,\psi^{-1}}^Y(\pi)$ is nonvanishing and
  cuspidal. Then
  \begin{enumerate}
  \item $\theta_{Y,\psi}^X(\chi_2^{-1}\cdot\theta_{X,\psi^{-1}}^Y(\pi))=\chi_1\pi$;
  \item $\theta_{Y,\psi}^{X_a}(\chi_2^{-1}\cdot\theta_{X,\psi^{-1}}^Y(\pi))$ is
    orthogonal to all cusp forms on $G(X_a)$ for $a>0$;
  \item $\theta_{Y,\psi}^{X_{-b}}(\chi_2^{-1}\cdot\theta_{X,\psi^{-1}}^Y(\pi))=0$ for $b>0$.
  \end{enumerate}
\end{thm}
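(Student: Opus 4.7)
The plan is to follow the strategy sketched in the introduction: at a high enough level in the $X$-tower, identify the iterated theta lift with a space of residues of Siegel-Eisenstein series via the regularised Siegel--Weil formula, and then use the tower property (Prop.~\ref{prop:tower_property}) to descend to the levels $X_a$, $X$, and $X_{-b}$.

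The first step is to establish Prop.~\ref{prop:theta_is_Eis}: for $A$ large enough, every element of $\theta_{Y,\psi}^{X_A}(\chi_2^{-1}\theta_{X,\psi^{-1}}^Y(\pi))$ is a residue at $s = (\dim Y - \dim X_A)/2$ of a Siegel-Eisenstein series on $G(X_A)$ induced from $\chi_1|\cdot|^s$ on the Siegel parabolic. To do this, unfold the iterated theta lift using the cuspidality of $\theta_{X,\psi^{-1}}^Y(\pi)$: for $f\in\pi$ and Schwartz data $\Phi,\Phi'$, a typical generator takes the form
\[
 g\mapsto \int_{[G(X)]} f(h') \Bigl(\int_{[G(Y)]} \theta_{X_A,Y,\psi}(g,h,\Phi)\,\chi_2^{-1}(h)\, \theta_{X,Y,\psi^{-1}}(h,h',\Phi')\,dh\Bigr)\,dh'.
\]
By Prop.~\ref{prop:direct_sum_of_weil_rep} and Prop.~\ref{prop:W_to_Wneg}, the inner integral is a regularised theta integral over $[G(Y)]$ for the dual pair $(G(Y),G(X_A\oplus X'))$, with the $\chi_2^{-1}$-twist absorbed into the splitting on the $Y$-side and the sign of $\psi$ reconciled by $\til{\jmath}$. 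Cor.~\ref{cor:SW} then rewrites this inner integral as a residue of a Siegel-Eisenstein series on $G(X_A\oplus X')$ induced by $\chi_1|\cdot|^s$; integrating against $f(h')$ over $[G(X)]$ via the doubling pairing of Sec.~\ref{sec:doubling_method} restricts the residue to one on the subgroup $G(X_A)$, proving the claim.

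With Prop.~\ref{prop:theta_is_Eis} in hand, take the constant term along $Q_{A-a}\subset G(X_A)$ of both sides, for $-\dim X\le a\le A$. On the theta side, Prop.~\ref{prop:tower_property} identifies the constant term with $\theta_{Y,\psi}^{X_a}(\chi_2^{-1}\theta_{X,\psi^{-1}}^Y(\pi))$, twisted on the Levi $m(\GL_{A-a}(\A_E),1)$ by $\chi_1\circ\det\,|\det|_{\A_E}^{\dim Y/2}$. On the Eisenstein side, the standard formula for the constant term of a Siegel-Eisenstein series along $Q_{A-a}$ gives a sum indexed by $P(k)\lmod G(X_A)(k)/Q_{A-a}(k)$ of intertwined sections. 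For $a>0$, every term surviving at the pole $s=(\dim Y-\dim X_A)/2$ is itself an Eisenstein series on $G(X_a)$, hence orthogonal to cusp forms, giving part~(2). For $a=0$, only the cell whose contribution, after the doubling pairing against $f$, identifies with $\chi_1\pi$ survives at the pole; matching the character $\chi_1\circ\det\,|\det|_{\A_E}^{\dim Y/2}$ on the Levi of $Q_A$ on both sides yields part~(1). For $a<0$, no cell contributes a pole at $s=(\dim Y-\dim X_A)/2$, so the Eisenstein side vanishes identically and part~(3) follows.

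The main obstacle is the careful bookkeeping in the first step. The regularised Siegel--Weil formula of \cite{MR2064052} is stated with the trivial character splitting on the $H$-side and $\chi_1$ on the $G$-side of a single dual pair, whereas our iterated theta lift involves $(\chi_1,\chi_2)$ entering on opposite sides of a seesaw and both $\psi$ and $\psi^{-1}$. Reconciling these so that the pole location and the character by which the Levi of $Q_{A-a}$ acts on the Eisenstein side exactly match the $\chi_1\circ\det\,|\det|_{\A_E}^{\dim Y/2}$ action produced by Prop.~\ref{prop:tower_property} is precisely what forces the $\chi_1$-twist appearing in part~(1) and the exact vanishing in part~(3); the non-quasi-split case of $G(X)$ or $G(Y)$ requires the doubling argument of Sec.~\ref{sec:split_meta} to keep the explicit splittings $\iota_{\chi_1},\iota_{\chi_2}$ consistent throughout.
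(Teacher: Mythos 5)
Your high-level plan — identify the iterated theta lift at a high rung $X_A$ with a space of Eisenstein residues via the regularised Siegel--Weil formula, then descend using Prop.~\ref{prop:tower_property} and constant terms — is exactly the paper's strategy, and you assign the right roles to Cor.~\ref{cor:SW} and the tower property. But you have a structural misidentification of the Eisenstein series on $G(X_A)$ that the argument produces, and this is not mere bookkeeping: it is what makes parts (1) and (3) come out. After the doubling unfolding (Prop.~\ref{prop:int_eis_is_eis}), the object you land in is \emph{not} a Siegel-Eisenstein series on $G(X_A)$ induced from the character $\chi_1|\cdot|^s$. It is $E^{Q_A}(\cdot,s,F^f)$, attached to the maximal parabolic $Q_A\subset G(X_A)$ stabilising $\ell_A^-$, whose inducing data
$\Ind_{Q_A(\A)}^{G(X_A)(\A)}\,\chi_1|\ |^s\otimes\chi_1\pi$
carries the cuspidal representation $\chi_1\pi$ on the $G(X)$-block of the Levi. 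Integrating the genuine Siegel-Eisenstein series on $G(W_A)=G(X_A\oplus X')$ against $f$ does not simply ``restrict the residue to $G(X_A)$''; it transmutes the inducing data so that $\pi$ is baked into the section $F^f$ once and for all. You instead treat the appearance of $\chi_1\pi$ as a consequence of a later pairing (``after the doubling pairing against $f$, identifies with $\chi_1\pi$''), but there is no later pairing — the residual computation happens on a single Eisenstein series whose inducing representation already contains $\pi$. (Also note that $G(X_A)$ need not even have a Siegel parabolic: $X$ is not assumed split.)

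This matters for both the mechanism and the conclusions of your constant-term step. Because $\chi_1|\ |^s$ on $\GL_A$ is not cuspidal, the constant term of $E^{Q_A}$ along $Q_{A-a}$ is \emph{not} given directly by a sum over $Q_A(k)\lmod G(X_A)(k)/Q_{A-a}(k)$ (and certainly not over $P(k)\lmod G(X_A)(k)/Q_{A-a}(k)$ for a Siegel $P$); the paper passes to the cuspidal support $Q_A'$ with Levi $(\Res_{E/k}\GL_1)^A\times G(X)$, writes $E^{Q_A}$ as a multi-residue of $E^{Q_A'}$, and only then applies the constant-term formula over $Q_A'(k)\lmod G(X_A)(k)/Q_{A-a}(k)$ (Lemma~\ref{lemma:const_term_of_eis}). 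For $a=0$ the residue lands in $\chi_1\pi$ precisely because $\pi$ is the cuspidal piece of the inducing data; for $a<0$ the vanishing uses cuspidality of $\pi$ to kill the second-stage constant term. Neither mechanism is available under your ``Siegel-Eisenstein induced from $\chi_1|\cdot|^s$'' description: a degenerate Eisenstein series would give no link to $\pi$ at $a=0$ and no reason to vanish for $a<0$. Finally, the pole is at $s=\tfrac{1}{2}(\dim X_A-\dim Y)$, not $\tfrac{1}{2}(\dim Y-\dim X_A)$; with $\dim X_A>\dim Y$ this is a positive $s_0$, consistent with \eqref{eq:reg_sw}.
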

\begin{rmk}
  Note that the theta lifts in opposite directions use additive
  characters inverse to each other. Here $\chi_1$ (resp. $\chi_2$) is
  regarded as character of $G (X) (\A)$ (resp. $G (Y)(\A)$) via $\det$. If
  we choose $\chi_1$ and $\chi_2$ to be quadratic characters then
  $\chi_i |_{\A_E^1}$ is trivial and thus $\chi_i\circ\det$ is
  trivial. Hence in this case we can leave these out of the formulae.
\end{rmk}
\begin{thm}\label{thm:theta_irred}
  Let $\pi$ be an irreducible cuspidal automorphic representation of
  $G(X)$. Assume that $\theta_{X,\psi}^Y(\pi)$ is nonvanishing and
  cuspidal. Then $\theta_{X,\psi}^Y(\pi)$ is irreducible.
\end{thm}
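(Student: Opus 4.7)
The plan is to exploit Thm~\ref{thm:theta_back_and_forth} together with its analogue starting from the $G(Y)$ side (mentioned in the remark after Thm~\ref{thm:theta_back_and_forth}) to force $\Theta:=\theta_{X,\psi}^Y(\pi)$ to coincide with any of its irreducible subrepresentations. Since $\Theta$ is cuspidal by hypothesis, it lies in the discrete cuspidal spectrum of $G(Y)(\A)$ and therefore decomposes as a direct sum $\Theta=\bigoplus_i\sigma_i$ of irreducible cuspidal automorphic representations of $G(Y)(\A)$.

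First I apply Thm~\ref{thm:theta_back_and_forth}(1) with $\psi$ and $\psi^{-1}$ interchanged; under the standing hypothesis that $\Theta$ is cuspidal and nonvanishing this gives
\[
\theta_{Y,\psi^{-1}}^X\bigl(\chi_2^{-1}\Theta\bigr)=\chi_1\pi.
\]
The theta lift of a representation is by definition the span of theta integrals, so it is additive in the input; hence $\sum_i\theta_{Y,\psi^{-1}}^X(\chi_2^{-1}\sigma_i)=\chi_1\pi$. Each summand is a $G(X)(\A)$-invariant subspace of the irreducible representation $\chi_1\pi$, so it is either $0$ or all of $\chi_1\pi$. At least one summand must be nonzero, and after relabelling we may assume
\[
\theta_{Y,\psi^{-1}}^X(\chi_2^{-1}\sigma_1)=\chi_1\pi.
\]

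Next I feed $\tau:=\chi_2^{-1}\sigma_1$ into the analogous involution on the $G(Y)$ side. The input $\tau$ is irreducible cuspidal, and its lift $\theta_{Y,\psi^{-1}}^X(\tau)=\chi_1\pi$ is cuspidal and nonzero, so the symmetric form of the involution identity yields
\[
\theta_{X,\psi}^Y\bigl(\chi_1^{-1}\theta_{Y,\psi^{-1}}^X(\tau)\bigr)=\chi_2\tau,
\]
i.e.\ $\theta_{X,\psi}^Y(\pi)=\sigma_1$; hence $\Theta=\sigma_1$ is irreducible.

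The main obstacle is not the abstract deduction above but making sure the involution really does go through in both directions with the correct character twists; the $\psi\leftrightarrow\psi^{-1}$ interchange and the interaction of $\chi_1,\chi_2$ with the $\det$-twists on $G(X)$ and $G(Y)$ need careful bookkeeping, but this is already handled in Sec.~\ref{sec:split_meta} and in Thm~\ref{thm:theta_back_and_forth} together with its stated analogue. A secondary point is the decomposability of $\Theta$ as a direct sum of irreducibles, which follows from cuspidality through the discrete decomposition of $L^2_{\mathrm{cusp}}(G(Y))$.
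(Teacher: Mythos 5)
Your proof is correct, and while it is driven by the same involution Thm.~\ref{thm:theta_back_and_forth} as the paper's argument, the route through that theorem is genuinely different. The paper picks an arbitrary irreducible \emph{submodule} $\sigma$ of $\Theta=\theta_{X,\psi}^Y(\pi)$ and, using parts (2) and (3) of the involution theorem together with the tower property, argues that the first occurrence of $\chi_2^{-1}\sigma$ in the $X$-tower must be at $X$ itself and must be cuspidal; this is how the paper obtains \emph{nonvanishing} of $\theta_{Y,\psi^{-1}}^X(\chi_2^{-1}\sigma)$, after which irreducibility of $\chi_1\pi$ forces equality and the symmetric involution returns $\sigma=\Theta$. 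You instead decompose $\Theta=\bigoplus_i\sigma_i$ (using semisimplicity of the cuspidal spectrum, which the paper does not explicitly invoke but is of course standard), observe that the theta lift is additive in the input so $\sum_i\theta_{Y,\psi^{-1}}^X(\chi_2^{-1}\sigma_i)=\chi_1\pi$ by part (1) alone, and then extract a nonvanishing summand purely from the irreducibility of $\chi_1\pi$. This avoids parts (2), (3) of Thm.~\ref{thm:theta_back_and_forth}, the first-occurrence argument, and any appeal to the tower property in this step, at the cost of requiring the direct-sum decomposition of the cuspidal space $\Theta$ up front. The final step --- running the symmetric involution backwards from $\tau=\chi_2^{-1}\sigma_1$ to conclude $\Theta=\sigma_1$ --- is the same as in the paper, and your bookkeeping of the $\psi\leftrightarrow\psi^{-1}$ interchange and the $\chi_1,\chi_2$ twists checks out.
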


For the above we have also similar results in the opposite direction:
\begin{thm}
  Let $\pi$ be an irreducible cuspidal automorphic representation of
  $G(Y)$. Assume that $\theta_{Y,\psi^{-1}}^X(\pi)$ is nonvanishing and
  cuspidal. Then
  \begin{enumerate}
  \item $\theta_{X,\psi}^Y(\chi_1^{-1}\cdot\theta_{Y,\psi^{-1}}^X(\pi))=\chi_2\pi$;
  \item $\theta_{X,\psi}^{Y_a}(\chi_1^{-1}\cdot\theta_{Y,\psi^{-1}}^X(\pi))$ is
    orthogonal to all cusp forms on $G(Y_a)$ for $a>0$;
  \item $\theta_{X,\psi}^{Y_{-b}}(\chi_1^{-1}\cdot\theta_{Y,\psi^{-1}}^X(\pi))=0$ for $b>0$.
  \end{enumerate}
\end{thm}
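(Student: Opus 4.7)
The plan is to deduce this theorem from Theorem~\ref{thm:theta_back_and_forth} via the symmetry between Hermitian and skew-Hermitian spaces recalled in Section~\ref{sec:split_meta}. Consider the skew-Hermitian space $Y^\delta$ and the Hermitian space $X^{\delta^{-1}}$; one has the natural identifications $G(Y^\delta)=G(Y)$, $G(X^{\delta^{-1}})=G(X)$, and $\Mp(X^{\delta^{-1}}\otimes Y^\delta)=\Mp(Y\otimes X)$, so the two members of the unitary dual reductive pair play completely symmetric roles, and in particular the Witt tower $Y_a$ on the Hermitian side corresponds to the Witt tower of $Y^\delta$ on the skew-Hermitian side.

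I would reinterpret the data $(\pi,Y,X)$ of the current statement as the data $(\pi,X^{\delta^{-1}},Y^\delta)$ fitting the hypotheses of Theorem~\ref{thm:theta_back_and_forth}: the role of the skew-Hermitian space ``$X$'' is played by $Y^\delta$ and the role of the Hermitian space ``$Y$'' by $X^{\delta^{-1}}$. Under this relabelling the splitting character $\chi_2$ of Theorem~\ref{thm:theta_back_and_forth} corresponds to our $\chi_1$ and vice versa, and the parity conditions $\chi_1|_{\A^\times}=\epsilon_{E/k}^m$, $\chi_2|_{\A^\times}=\epsilon_{E/k}^n$ are preserved because $\dim Y^\delta = m$ and $\dim X^{\delta^{-1}}= n$. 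The tower property (Prop.~\ref{prop:tower_property}) is explicitly stated with both orientations, and Cor.~\ref{cor:SW} treats the Hermitian/skew-Hermitian sides symmetrically, so every ingredient used in the proof of Theorem~\ref{thm:theta_back_and_forth} is available on the opposite side.

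With the symmetry installed, I would run the same three-step argument: use the regularised Siegel--Weil formula to show that for $A$ large the full round trip $\theta_{X,\psi}^{Y_A}(\chi_1^{-1}\theta_{Y,\psi^{-1}}^X(\pi))$ lies in a space of residues of Eisenstein series on $G(Y_A)$; take constant terms along the parabolic $R_{A-a}$ of $G(Y_A)$ stabilising an $(A-a)$-dimensional isotropic subspace and invoke the symmetric version of Prop.~\ref{prop:tower_property} on the theta side; then compare with the constant term on the Eisenstein side, which vanishes for negative Witt indices (giving (3)), produces a residue orthogonal to cuspidal data for $a>0$ (giving (2)), and collapses to $\chi_2\pi$ at $a=0$ (giving (1)). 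The twist by $\chi_2$ in (1) is precisely the $\det$-contribution from the Weil representation on $G(Y)=G(Y^\delta)$ that now plays the role previously played by $\chi_1$ on the $G(X)$-side.

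The main obstacle is not the structural argument but the bookkeeping of splittings and twists after the swap. The explicit splittings $\beta_Y(g)$ and $\beta_X(h)$ in \eqref{eq:explicit_splitting_Y} involve Weil indices and factors of $\delta$ placed differently on the two sides, and the change of additive character from $\psi$ to $\psi^{-1}$ between the two directions must be tracked through Prop.~\ref{prop:W_to_Wneg}. Using \eqref{eq:change_character} one needs to verify that the composite of these data matches the symmetric version of Theorem~\ref{thm:theta_back_and_forth} applied to $(\pi, X^{\delta^{-1}}, Y^\delta, \psi, \chi_2, \chi_1)$ exactly, so that the emerging twist on $\pi$ is $\chi_2$ and not $\chi_1$ and so that no stray cocycle is introduced. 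Once this compatibility is pinned down, conclusions (1)--(3) follow by the argument of Theorem~\ref{thm:theta_back_and_forth}.
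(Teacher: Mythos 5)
Your proposal is correct and is essentially the paper's own argument: the paper derives this theorem from Theorem~\ref{thm:theta_back_and_forth} by the identifications $G(Y^\delta)=G(Y)$, $G(X^{\delta^{-1}})=G(X)$ from Section~\ref{sec:split_meta}, which swap the Hermitian and skew-Hermitian roles (and hence $\chi_1\leftrightarrow\chi_2$). You spell out the relabelling and the bookkeeping of splittings a bit more explicitly than the paper, which simply invokes the symmetry, but the route is the same.
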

\begin{thm}
  Let $\pi$ be an irreducible cuspidal automorphic representation of
  $G(Y)$. Assume that $\theta_{Y,\psi}^X(\pi)$ is nonvanishing and
  cuspidal. Then $\theta_{Y,\psi}^X(\pi)$ is irreducible.
\end{thm}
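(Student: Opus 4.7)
The plan is to deduce this theorem directly from Thm.~\ref{thm:theta_irred} by exploiting the symmetry between Hermitian and skew-Hermitian spaces recorded in Sec.~\ref{sec:split_meta}. Under the canonical identifications $G(Y) = G(Y^\delta)$ and $G(X) = G(X^{\delta^{-1}})$, together with $\Mp(X^{\delta^{-1}}\otimes Y^\delta) = \Mp(Y\otimes X)$, the dual reductive pair $(G(Y),G(X))$ is transported to $(G(Y^\delta),G(X^{\delta^{-1}}))$, in which the first factor is now the isometry group of a skew-Hermitian space and the second of a Hermitian space. This matches the setup of Thm.~\ref{thm:theta_irred} with the roles of $X$ and $Y$ swapped and with $\chi_1$ taking over the role of $\chi_2$ (and vice versa). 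The parity conditions $\chi_1|_{\A^\times} = \epsilon_{E/k}^m$ and $\chi_2|_{\A^\times} = \epsilon_{E/k}^n$ remain consistent since $\dim Y^\delta = m$ and $\dim X^{\delta^{-1}} = n$.

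First I would verify that under this transport the theta lift $\theta_{Y,\psi}^X(\pi)$ (with splittings determined by $\chi_1,\chi_2$) is identified with $\theta_{Y^\delta,\psi}^{X^{\delta^{-1}}}(\pi)$ (with the correspondingly renamed splittings), so that the hypothesis ``$\theta_{Y,\psi}^X(\pi)$ is nonvanishing and cuspidal'' translates verbatim to the transported pair. Then applying Thm.~\ref{thm:theta_irred} in the transported setting yields irreducibility of $\theta_{Y^\delta,\psi}^{X^{\delta^{-1}}}(\pi)$, which transports back to the irreducibility of $\theta_{Y,\psi}^X(\pi)$.

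All of the ingredients required by Thm.~\ref{thm:theta_irred} are already available in symmetric form: the involutive property stated immediately above as items (1)--(3) gives the back-and-forth identity $\theta_{X,\psi}^Y(\chi_1^{-1}\theta_{Y,\psi^{-1}}^X(\pi)) = \chi_2\pi$; the ``symmetric version for lifts from $G(X)$ to $G(Y_a)$'' is explicitly noted in Prop.~\ref{prop:tower_property}; and the regularised Siegel-Weil formula (Cor.~\ref{cor:SW}) is symmetric under interchange of the members of the dual pair, once the $\delta$-twist is accounted for. Thus the entire argument of Thm.~\ref{thm:theta_irred}---realising the double theta lift inside a space of residues of Siegel-Eisenstein series, taking constant terms along $R_{a}$ (the analogue of $Q_a$ on the $Y$-side) and applying the tower property to peel off the copies of $\chi_2\pi$, and then using irreducibility of the Siegel induced representation $R_n(V^c_v)$ at the inducing point to force irreducibility of the lift---goes through with the labels $X$ and $Y$ interchanged.

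The main obstacle is purely bookkeeping: one must track how $\chi_1^{\pm 1}$ and $\chi_2^{\pm 1}$ transform under the $\delta$-twist identification (in particular using \eqref{eq:change_character} to absorb factors of $\nu'\circ\det$ that arise when the splitting characters are renamed), and verify that the transported embeddings of the dual pair into $\Mp(W)$ reproduce the theta lift on the nose rather than up to a character twist. This is precisely the symmetry flagged in Sec.~\ref{sec:tower_property} as the reason for proving the tower property in only one direction. Once these compatibilities are checked, no further work is needed: Thm.~\ref{thm:theta_irred} applied to the transported pair gives the result.
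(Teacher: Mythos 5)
Your proposal is exactly the paper's approach: the paper itself disposes of this theorem in a single sentence, ``As pointed out in Sec.~\ref{sec:split_meta} the roles of $G(X)$ and $G(Y)$ are completely symmetric. Thus we only need to show the theorems in one direction,'' invoking the $\delta$-twist identifications $G(Y)=G(Y^\delta)$, $G(X)=G(X^{\delta^{-1}})$, $\Mp(X^{\delta^{-1}}\otimes Y^\delta)=\Mp(Y\otimes X)$ to transport Thm.~\ref{thm:theta_irred} to the $G(Y)$ side. You merely spell out the parity and character bookkeeping that the paper leaves implicit; that bookkeeping is correct, so your argument is a faithful expansion of the paper's own proof.
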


As pointed out in Sec.~\ref{sec:split_meta} the roles of $G(X)$ and $G(Y)$ are completely
symmetric. Thus we only need to show the theorems in one
direction. The proofs hinge on the following key
computation. Essentially the following analysis shows that
$\theta_{Y,\psi}^{X_a}(\chi_2^{-1}\theta_{X,\psi^{-1}}^Y(\pi))$ is in a space of
Eisenstein series.

By definition $\theta_{Y,\psi}^{X_a}(\chi_2^{-1}\theta_{X,\psi^{-1}}^Y(\pi))$
consists of functions of the form
\begin{equation}
  \label{eq:theta_back_and_forth}
  g_a \mapsto \int_{[G(Y)]}\int_{[G(X)]}\chi_2^{-1} (\det h)\theta_{X_a,Y,\psi}(g_a,h,\phi_1)\theta_{X,Y,\psi^{-1}}(g,h,\phi_2)f(g)dgdh
\end{equation}
for $\phi_1 \in \cS_{X_a,Y}$, $\phi_2 \in \cS_{X,Y}$ and $f\in \pi$.

We change $X$ to $X'$ which has the same underlying vector space as
$X$ but with the form $-\form{\ }{\ }_X$. We identify $X'$ with $X$
and $G(X')$ with $G(X)$ accordingly.  Then by
Prop.~\ref{prop:W_to_Wneg} to keep the action of Weil representation
`unchanged' we just need to change $\psi$ to $\psi^{-1}$. By
unfolding the definition of splitting we can check that the splittings are
compatible with this move. Set $W_a = X_a\oplus X'$. Then the integral
in \eqref{eq:theta_back_and_forth} is equal to:
\begin{align}
\label{eq:theta_back_forth_2}
    &\int_{[G(Y)]}\int_{[G(X')]}\chi_2^{-1} (\det h)\theta_{X_a,Y,\psi}(g_a,h,\phi_1) \theta_{X',Y,\psi}(g,h,\phi_2)f(g)dgdh\\
= &\int_{[G(Y)]}\int_{[G(X')]}\chi_2^{-1}(\det h) \theta_{W_a,Y,\psi}((g_a,g),h,\phi_1\otimes \phi_2)f(g)dgdh.
\end{align}
where $(g_a,g)$ denotes an element in $G (W_a) (\A)$ via obvious
identification. Now the character used for splitting over $G (Y) (\A)$ is $\chi_2^2$ and the one for $G (W_a) (\A)$  is $\chi_1$. By \eqref{eq:change_character} to use the trivial splitting for $G (Y) (\A)$ we just need to twist the action of $G (Y) (\A)$ by the character $\chi_2^{-1}$. Thus we find  the above is equal to
\begin{equation}\label{eq:theta_back_forth_4}
  \int_{[G(Y)]}\int_{[G(X')]} \theta_{W_a,Y,\psi}^{\mathrm {triv}} ((g_a,g),h,\phi_1\otimes \phi_2)f(g)dgdh
\end{equation}
where $\mathrm { triv}$ means that we are using the trivial character to determine the splitting for $G (Y) (\A)$. From now on we will drop $\mathrm {triv}$ with the trivial splitting understood.
 We would like to exchange
order of integration, so first consider the integral
\begin{align}
  &\int_{[G(X')]} I_{\REG}((g_a,g),\phi_1\otimes \phi_2) f(g) dg\\
= &\int_{[G(X')]} c_\alpha^{-1}\int_{[G(Y)]} \theta_{W_a,Y,\psi}((g_a,g),h,\omega(\alpha)(\phi_1\otimes \phi_2))f(g) dh dg.
\end{align}
 Since the inner integral, which is
$I_{\REG}((g_a,g),\phi_1\otimes \phi_2))$, is absolutely convergent we
can exchange order of integration to get
\begin{align}
  \int_{[G(Y)]} \int_{[G(X')]} c_\alpha^{-1} \theta_{W_a,Y,\psi}((g_a,g),h,\omega(\alpha)(\phi_1\otimes \phi_2))f(g) dg dh.
\end{align}
By the adjoint property of $\omega(\alpha)$ the above is equal to
\begin{align}
    \int_{[G(Y)]} \int_{[G(X')]}  \theta_{W_a,Y,\psi}((g_a,g),h,(\phi_1\otimes \phi_2))f(g) dg dh
\end{align}
which is exactly \eqref{eq:theta_back_forth_4}. When $\dim Y < \dim X + a$ we can apply
Cor.~\ref{cor:SW}. Thus \eqref{eq:theta_back_forth_4} is equal to
\begin{equation}\label{eq:theta_back_forth_3}
  \int_{[G(X')]}\Res_{s=\frac{\dim X +a -\dim Y}{2}} E((g_a,g),s,F_\varphi) f(g) dg
\end{equation}
for some $K_{G(W_a)}$-finite $\varphi \in \cS(Y^{\dim X +a}(\A))$ if
we start with $\phi_1$ and $\phi_2$ that are $K_{G(X_a)}$- and
$K_{G(X')}$-finite. Here $K_{G(W_a)}=K_{G(X_a)}\times K_{G(X')}$.

Now we compute 
\begin{equation}\label{eq:int_eis}
  \int_{[G(X')]} E^{P_a}((h,g),s,F_\varphi) f(g) dg,
\end{equation}
where $g\in G (X')$ and $h\in G(X_a)$. Note that we have added the superscript to
indicate that $E$ is an Eisenstein series associated to the Siegel
parabolic $P_a$ of $G(W_a)$ which stabilises $W_a^-$. Recall that
 $Q_a$ denotes the parabolic subgroup of $G(X_a)$ stabilising
$\ell_a^-$. 

\begin{prop}\label{prop:int_eis_is_eis}
Let $F$ be a $K_{G(W_a)}$-finite section of $\Ind_{P_a(\A)}^{G(W_a)(\A)}\chi_1 |\ |^s$. Then 
  \begin{equation}\label{eq:eis=eis}
    \int_{[G(X')]} E^{P_a}((h,g),s,F) f(g) dg = E^{Q_a}(h,s,F^f)
  \end{equation}
where 
\begin{equation}
  \label{eq:F^f}
  F^f(h,s):= \int_{G(X')(\A)}  F((h,g),s)\pi(g)f dg.
\end{equation}
\end{prop}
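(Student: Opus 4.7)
The plan is to unfold $E^{P_a}$ via the Bruhat decomposition of $G(W_a)$ with respect to $P_a$ and the subgroup $Q_a \times G(X')$, and to apply cuspidality of $f$ to eliminate all but a distinguished family of double cosets whose contributions assemble into $E^{Q_a}(h, s, F^f)$. Concretely, one writes
\[
E^{P_a}((h,g),s,F) = \sum_\delta \sum_{[(q,\epsilon)]} F(\delta \cdot (qh, \epsilon g),s),
\]
where $\delta$ runs over $P_a(k)\backslash G(W_a)(k)/(Q_a(k)\times G(X')(k))$ and $[(q,\epsilon)]$ over cosets of the stabilizer $S_\delta := \delta^{-1}P_a\delta \cap (Q_a \times G(X'))$ in $Q_a(k)\times G(X')(k)$. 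Via the identification $P_a(k)\backslash G(W_a)(k)$ with the variety of maximal isotropic subspaces of $W_a$, these double cosets correspond to $(Q_a\times G(X'))$-orbits on such subspaces, parameterized by the invariants $L\cap X_a$, $L \cap X'$ and the associated ``twist'' data relating the quotient $L/(L\cap X_a)$ to $X'$.

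Cuspidality then cuts the sum down to the distinguished orbits, namely those with $L\cap X' = 0$ and $L\cap X_a$ an $a$-dimensional isotropic subspace of $X_a$ in the $G(X_a)$-orbit of $\ell_a^-$. For any non-distinguished orbit with $L\cap X' \ne 0$, the stabilizer of $L$ in $Q_a \times G(X')$ projects to a subgroup of $G(X')$ containing the unipotent radical $U_{P'}$ of the proper parabolic $P' \subsetneq G(X')$ stabilizing $L\cap X'$; the contribution of this orbit to the integral
\[
\int_{[G(X')]} \biggl( \sum_{[(q,\epsilon)]} F(\delta \cdot (qh, \epsilon g), s) \biggr) f(g)\,dg
\]
then contains an inner integration $\int_{[U_{P'}]} f(ng)\,dn$ which vanishes by cuspidality of $f$. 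Orbits with ``non-generic'' twist or with $L\cap X_a$ of dimension different from $a$ are handled analogously after expanding an auxiliary unipotent integration.

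The distinguished orbits are indexed by cosets $\eta \in Q_a(k)\backslash G(X_a)(k)$ via $L\cap X_a = \ell_a^-\eta$. For each such $\eta$, the stabilizer $S_\delta$ is a ``graph'' subgroup of $(Q_a \cap \eta^{-1}Q_a\eta) \times G(X')$ determined by the Levi projection $\sigma\colon Q_a \to G(X) \simeq G(X')$. Unfolding the sum over cosets of this graph stabilizer against the integration over $[G(X')]$ collapses to an integration over the full adelic group $G(X')(\A)$:
\[
\int_{G(X')(\A)} F((\eta h, g), s) f(g)\,dg = F^f(\eta h, s)(e),
\]
where we regard $F^f$ as a $\pi$-valued function on $G(X_a)(\A)$ evaluated at the identity of $G(X)(\A)$. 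Summing over $\eta$ yields $E^{Q_a}(h, s, F^f)(e)$, proving the identity. The main obstacle is the orbit classification together with verifying, case by case, that each non-distinguished orbit admits a full unipotent radical of a proper parabolic of $G(X')$ inside its stabilizer; absolute convergence of all integrals for $\Re s \gg 0$ justifies the rearrangements, and meromorphic continuation extends the identity throughout.
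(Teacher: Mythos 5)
Your overall strategy — unfold the Eisenstein series over a double coset decomposition, use cuspidality of $f$ to kill all but a distinguished family of orbits, and then recognize the surviving contribution as $E^{Q_a}(h,s,F^f)$ — is the same skeleton as the paper's. But you take a genuinely finer decomposition, using double cosets $P_a \lmod G(W_a) /(Q_a \times G(X'))$, whereas the paper works with $P_a \lmod G(W_a) / (G(X_a) \times G(X'))$. The paper's choice is not cosmetic: those coarser orbits are classified cleanly by a single invariant $d = \dim(L\cap X')$ (quoting Lemma~2.1 of Piatetski-Shapiro--Rallis verbatim), and then the Eisenstein sum $\sum_{\delta \in Q_a(k)\lmod G(X_a)(k)}$ emerges algebraically from the group-theoretic identity $P_a\, G(X') \cap G(X_a) = Q_a$ applied to the single $d=0$ orbit. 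Your finer $Q_a \times G(X')$-orbit decomposition must, in effect, rediscover this structure by hand, and this is where the proposal becomes vague: the ``twist data'' you invoke to parametrize the orbits is never made precise, the handling of ``non-generic twist'' orbits is asserted rather than argued, and your claim that distinguished orbits are indexed by $\eta \in Q_a(k)\lmod G(X_a)(k)$ conflates the $Q_a \times G(X')$-double cosets inside the main $d=0$ orbit (which are actually indexed by $Q_a(k)\lmod G(X_a)(k)/Q_a(k)$) with the single-coset sum that appears in the final Eisenstein series. The latter sum only emerges after you combine the double coset decomposition with the unfolding of the stabilizer cosets $S_\delta \lmod (Q_a \times G(X'))$ against the $[G(X')]$-integration, and this bookkeeping is glossed over. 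Nothing here is irrecoverable — the graph structure of the stabilizer you identify is exactly right, and the final collapse to $\int_{G(X')(\A)}F((\eta h,g),s)\pi(g)f\,dg$ is what happens — but the paper's route via $G(X_a)\times G(X')$-orbits avoids the need to classify the finer orbits at all, and you should either adopt it or supply the missing orbit and stabilizer calculations explicitly.
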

\begin{rmk}
  Here $F^f(h,s)$ comes from the computation below and as shown in the
  next lemma it is a section of $\Ind_{Q_a(\A)}^{G(X_a)(\A)}\chi_1 |\
  |^s\otimes \chi_1\pi$ and thus the notation of Eisenstein series on the
  right handside of \eqref{eq:eis=eis} is justified. We always use
  normalised induction.
\end{rmk}

\begin{lemma}
  The function $F^f(h,s)$ is absolutely convergent for $\Re s > (\dim X +a)/2$
 and has meromorphic continuation to the whole complex
  plane. It is a $K_{G(X_a)}$-finite section in
  $\Ind_{Q_a(\A)}^{G(X_a)(\A)}\chi_1 |\ |^s\otimes \chi_1 \pi$.
\end{lemma}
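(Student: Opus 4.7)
The lemma has three assertions: absolute convergence of the integral defining $F^f(h,s)$ in the stated half-plane, meromorphic continuation in $s$, and the $\Ind_{Q_a(\A)}^{G(X_a)(\A)}\chi_1|\ |^s\otimes\chi_1\pi$ transformation law together with $K$-finiteness. I would prove these in the above order. For absolute convergence, the plan is to estimate the integrand via the Iwasawa decomposition $G(W_a)(\A)=P_a(\A)K_{G(W_a)}$: for $(h,g)=p\cdot k$ one has $|F((h,g),s)|\ll|a(p)|^{\Re s+c}$ uniformly in $k$, and the embedding $g\mapsto(1,g)$ of $G(X')$ into $G(W_a)$ produces an explicit gauge $|a((1,g))|$ on $G(X')(\A)$ comparable, in Iwasawa coordinates with respect to the Siegel parabolic of $G(X')$, to $|\det A|_{\A_E}$. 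Combining this gauge estimate with the moderate growth of $g\mapsto\pi(g)f$ and the usual convergence estimates for Godement--Jacquet-type zeta integrals yields absolute convergence for $\Re s>(\dim X+a)/2$, precisely the half-plane of convergence of $E^{P_a}(\cdot,s,F)$.

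For the transformation law the key geometric input is an identity in $G(W_a)$: for $q=m(\alpha,\delta)n(\mu,\beta)\in Q_a$ with $\alpha\in\GL_a(\A_E)$ and $\delta\in G(X)(\A)$ one has
\[
(q,1)=p_0\cdot(1,\delta^{-1})\cdot(n(\mu,\beta),1),
\]
where $p_0=(m(\alpha,\delta),\delta)$ and $(n(\mu,\beta),1)$ both lie in $P_a$. This is checked by a direct computation on $W_a^-=\ell_a^-\oplus X^\nabla$: the factor $(m(\alpha,\delta),\delta)$ preserves $\ell_a^-$ and sends $(x,-x)\in X^\nabla$ to $(x\delta,-x\delta)\in X^\nabla$; the factor $(n(\mu,\beta),1)$ sends $(x,-x)$ to $((0,x,-x\mu^*),-x)\in\ell_a^-\oplus X^\nabla$. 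The Levi part of $p_0$ acts on $W_a^+=\ell_a^+\oplus X^\Delta$ as $\diag(\alpha,\delta)$, so $a(p_0)=\det\alpha\cdot\det\delta$; since $\delta$ is unitary, $|\det\delta|=1$. Using the $P_a$-equivariance of $F$ and changing variables $g\mapsto\delta g$ in the defining integral gives
\[
F^f(qh,s)=\chi_1(\det\alpha)|\det\alpha|_{\A_E}^{s}\cdot\chi_1(\det\delta)\pi(\delta)F^f(h,s),
\]
which is exactly the section property of $\Ind_{Q_a(\A)}^{G(X_a)(\A)}\chi_1|\ |^s\otimes\chi_1\pi$, modulo an overall shift coming from the normalised modular characters. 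The $K_{G(X_a)}$-finiteness is immediate: $K_{G(X_a)}\subset K_{G(W_a)}$ acts only through $h$, so the span of right translates of $F^f$ is contained in the finite-dimensional span of right translates of $F$.

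Finally, meromorphic continuation of $F^f(h,s)$ follows by factorising $F$ and $f$ into pure tensors and identifying each local factor as a Piatetski-Shapiro--Rallis doubling zeta integral for $\pi_v$ paired against a section of $\Ind_{P_{a,v}}^{G(W_a)_v}\chi_{1,v}|\ |^s$; such local integrals are rational in $q_v^{-s}$ in the non-archimedean case and meromorphic in $s$ in the archimedean case, and they assemble to a global meromorphic continuation. I anticipate the main obstacle to be tracking the precise shift in the $|\ |^s$ exponent under the passage from the normalised induction of $G(W_a)$ to that of $G(X_a)$, i.e.\ the ratio $\delta_{P_a}^{1/2}/\delta_{Q_a}^{1/2}$ evaluated at the Levi element $m_{Q_a}(\alpha,\delta)$; once this bookkeeping is settled, the three steps above complete the proof.
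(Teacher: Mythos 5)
Your transformation-law computation (the factorisation of $(q,1)$ into a $P_a$-factor and a $(1,\delta^{-1})$-factor, followed by a change of variables $g\mapsto\delta g$) and your $K$-finiteness observation match the paper's proof. The other two steps diverge, and one of them has a real gap.

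For absolute convergence the paper uses a much shorter argument than your Iwasawa gauge estimate: since $P_a(k)\cap G(X')(k)=\{1\}$, the sum $\sum_{\gamma\in G(X')(k)}F((h,\gamma g),s)$ is a \emph{partial} sum of the Eisenstein series $E^{P_a}((h,g),s,F)$, so its absolute convergence for $\Re s>(\dim X+a)/2$ is inherited directly; one then integrates against the (bounded, rapidly decreasing) cusp form $g\mapsto\pi(g)f$ over $[G(X')]$. Your direct-estimate route could also be made to work, but it re-proves convergence of the Eisenstein series rather than citing it, and nailing down the exact half-plane is more delicate than your sketch suggests.

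For meromorphic continuation there is a gap in your proposal as stated. You propose to ``factorise $F$ and $f$ into pure tensors and identify each local factor as a doubling zeta integral,'' but $F^f(h,s)=\int_{G(X')(\A)}F((h,g),s)\,\pi(g)f\,dg$ is a \emph{$\pi$-valued} function of $h$, not a scalar; there is nothing to factorise until you have paired against something. This is exactly the point the paper handles carefully: using $K_{G(X_a)}$-finiteness and the already-established transformation law to reduce to $h\in G(X)(\A)$ (where $F^f(\cdot,s)$ lies in a fixed finite-dimensional $K$-isotypic piece of $\chi_1\pi$), then pairing against cusp forms $\xi\in\chi_1\pi$ and applying the basic identity of Piatetski-Shapiro--Rallis to rewrite $\int_{[G(X')]}F^f(h,s)\overline{\xi(h)}\,dh$ as $\int_{[G(W)]}E^P((g_1,g_2),s,F)f(g_2)\overline{\xi(g_1)}\,dg_1dg_2$, whose meromorphic continuation is that of the Siegel Eisenstein series $E^P$ on $G(W)$. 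If you insert that pairing step your local-zeta-integral route becomes essentially the unfolded/local version of the same argument, but as written it skips the step that makes the object scalar and factorisable.
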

\begin{proof}
  The equation \eqref{eq:F^f} is equal to
\begin{equation}
  \int_{[G(X')]}  \sum_{\gamma \in G(X')(k)}F((h,\gamma g),s)\pi(g)f dg.  
\end{equation}
Since $P_a(k) \cap G(X')(k) =\{1\}$, we have an embedding
\begin{equation}
  G(X')(k) \hookrightarrow P_a(k) \lmod G(W_a)(k)
\end{equation}
and thus the inner sum is a partial sum of the Eisenstein series
$E^{P_a}((h,g),s,F)$. Since the Eisenstein series is absolutely
convergent for $\Re s > (\dim X +a)/2$, we find that $F^f$ is absolutely
convergent for $\Re s > (\dim X +a)/2$.

Before checking meromorphic continuation we note how $Q_a(\A)$ acts
via left translation. Let $p\in U_{Q_a}(\A)$. Then $p\in P_a(\A)$ and also
$\det_{W_a^- (\A)} p=1$. The determinant is for the action of $p$ on $W_a^- (\A)$. Thus $F^{f}(ph,s)$ is equal to
\begin{equation}
  \int_{G(X')(\A)}  F((ph,g),s)\pi(g)fdg = 
  \int_{G(X')(\A)}  F((h,g),s)\pi(g)fdg.
\end{equation}
Let $p\in M_{Q_a}(\A)$. First suppose $p$ acts trivially on $X$. Then $p$ is again in $P_a(\A)$. Then $F^{f}(ph,s)$ is equal to
\begin{equation}
  \int_{G(X')(\A)}  F((ph,g),s)\pi(g)fdg = 
  \chi_1(\det_{\ell_a^+(\A)} p)|\det_{\ell_a^+(\A)} p|^s \delta_{P_a}^{\frac{1}{2}}(p)\int_{G(X')(\A)}  F((h,g),s)\pi(g)fdg.
\end{equation}
Also we can check that we have equalities for the modular characters: $\delta_{Q_a}(p)= \delta_{P_a}(p) = |\det_{\ell_a^+} p|^{\dim X +a}$.
Secondly suppose $p|_{\ell_a^\pm(\A)}$ is trivial. Then $p \in G(X)(\A)$. Thus 
\begin{align}
  & \int_{G(X')(\A)}  F((ph,g),s)\pi(g)f dg \\
=  &\int_{G(X')(\A)}  \chi_1 (\det p) F((h,p^{-1}g),s)\pi(g)f dg \\
=  &\int_{G(X')(\A)}  \chi_1 (\det p)F((h,g),s)\pi(pg)f dg .
\end{align}
Note $(p,p)$ preserves $X^\nabla$ and as $F$ is a section of $\Ind_{P_a(\A)}^{G(W_a)(\A)}\chi_1 |\ |^s$  the first equality above holds.

Combining these we find that 
\begin{equation}
  \int_{G(X')(\A)}  F((\cdot,g),s)\pi(g)f dg
\end{equation}
is an element in the $\Ind_{Q_a(\A)}^{G(X_a)(\A)} (\chi_1 |\ |^s \otimes \chi_1\pi)$. It is obviously $K_{G(X_a)}$-finite since $F(s)$ is $K_{G(W_a)}$-finite.

Now we check that we have meromorphic continuation. Since $G(X_a)(\A)=
Q_a(\A)K_{G(X_a)}$ and $F(s)$ is $K_{G(W_a)}$-finite, we may just
assume that $h$ is in $Q_a(\A)$. Since  
$U_{Q_a}(\A)$ acts trivially on $F$ by left translation and the $\GL(\ell_a^+)(\A_E)$-part of
$M_{Q_a}(\A)$ acts by $\chi_1 |\ |^s$ we may further assume that $h$ is in
the subgroup $G(X)(\A)$ of $G(X_a)(\A)$. Then $F^f(s)$ restricted to
$G(X)(\A)$ is in the space of $\chi_1\pi$. Consider for all cusp form
$\xi \in \chi_1\pi$ the $L^2$-inner product
 \begin{equation}\label{eq:L2_inner}
   \int_{[G(X')]} F^f(h,s)\overline{\xi(h)} dh.
 \end{equation}
 If it has meromorphic continuation to the whole complex plane for all
 cusp form $\xi \in \chi_1\pi$, then $F^f(s)$ has meromorphic
 continuation. The equation \eqref{eq:L2_inner} by definition is equal
 to
\begin{align}
  &\int_{[G(X')]}  \int_{G(X')(\A)}  F((h,g),s)f(g)  \overline{\xi(h)} dg dh\\
=& \int_{[G(X')]}  \int_{G(X')(\A)} F((1,h^{-1}g),s)f(g)  \overline{\xi(h)} dg dh\\
=& \int_{[G(X')]}  \int_{G(X')(\A)} F((1,g),s)f(hg)  \overline{\xi(h)} dg dh\\
=&  \int_{G(X')(\A)} F((1,g),s)\form{f(g)}{  \xi}_{L^2} dg. 
\end{align}
This according to the basic identity in
\cite{piatetski-shapiro87:_l_funct_for_class_group} is equal to
\begin{equation}
   \int_{[G(W)]} E^{P}((g_1,g_2),s,F)f(g_2)  \overline{\xi(g_1)} dg_1 dg_2
\end{equation}
where $P$ is the Siegel parabolic of $G(W)$ stabilising $W^-$ and we
restrict $F$ to $G(W)(\A)$ so it is in the induced representation
$\Ind_{P(\A)}^{G(W)(\A)}\chi_1 |\ |^{s+\frac{a}{2}}$. Thus
\eqref{eq:F^f} has meromorphic continuation.
\end{proof}

\begin{proof}[proof of Prop.~\ref{prop:int_eis_is_eis}]
  The method of proof is a generalisation of the basic identity in
  \cite{piatetski-shapiro87:_l_funct_for_class_group}. We unfold and
  study the double coset $P_a \lmod G(W_a) / G(X_a)\times G(X')$ and
  then identify the negligible orbits and the main orbit.

  Consider the set $P_a \lmod G(W_a)$ that parametrises the maximal
  isotropic subspaces of $W_a$ over $E$. Let $L$ be a maximal
  isotropic subspace. Let $d=\dim (L\cap X')$. Then $\dim (L \cap X_a)
  = d+a$. The proof in
  \cite[Lemma~2.1]{piatetski-shapiro87:_l_funct_for_class_group} goes
  through word for word. The $G(X_a)\times G(X')$-orbits of maximal
  isotropic subspaces are parametrised by the invariant $d$. When
  $d=0$, one representative for the double coset is $W_a^+$. For each
  $d=0,\ldots, \dim X$, take $g_d$ to be the element in $G(W_a)$ that
  conjugates to $W_a^+$ a representative of the orbit corresponding to
  $d$ . For clarity of notation we omit taking $k$-points in the sums
  below. For $\Re s$ large the left hand side of \eqref{eq:eis=eis}
  is equal to
\begin{align*}
  &\int_{[G(X')]} \sum_{\gamma \in P_a\lmod G(W_a)} F(\gamma(h,g),s)f(g)dg\\
= &\int_{[G(X')]} \sum_{d=0}^{\dim X} \sum_{\gamma\in   g_d^{-1} P_a g_d \cap G(X_a) \times G(X') \lmod G(X_a) \times G(X')}F(g_d\gamma(h,g),s)f(g)dg\\
= & \sum_{d=0}^{\dim X} \int_{[G(X')]}  \sum_{\delta \in g_d^{-1} P_a g_d G(X')\cap G(X_a)\lmod G(X_a)}\sum_{\gamma \in g_d^{-1} P_a g_d \cap G(X') \lmod G(X')} F(g_d(\delta,\gamma)(h,g),s)f(g)dg\\
= & \sum_{d=0}^{\dim X} \int_{g_d^{-1} P_a(k) g_d \cap G(X')(k) \lmod G(X')(\A)}  \sum_{\delta \in g_d^{-1} P_a g_d G(X')\cap G(X_a)\lmod G(X_a)}  F(g_d(\delta h,g),s)f(g)dg\\
= & \sum_{d=0}^{\dim X} \int_{g_d^{-1} P_a(\A) g_d \cap G(X')(\A) \lmod G(X')(\A)} 
\int_{[g_d^{-1} P_a g_d \cap G(X')]} \\
&\qquad \sum_{\delta \in g_d^{-1} P_a g_d G(X')\cap G(X_a)\lmod G(X_a)} 
 F(g_d(\delta h,gg'),s)f(gg')dg dg'.
\end{align*}

We will verify as in
\cite{piatetski-shapiro87:_l_funct_for_class_group} that only the
integral corresponding to $d=0$ contributes. Fix $d$ and thus the
corresponding maximal isotropic subspace is $L=W_a^+g_d$. Let $Q_d$ be
the parabolic subgroup of $G(X')$ stabilising the isotropic subspace
$L_1:=L \cap X'$ and let $U_d$ be its unipotent radical. Note that we
view $G(X')$ as a subgroup of $G(W_a)$. Suppose $d>0$. Then $Q_d$ is a
proper parabolic subgroup. Decompose $X'$ as $L_1\oplus X''$.  Let
$n\in U_d$ and let $x\in L$. Write $x=x_1+x_2+x_3$ where $x_1 \in
X_a$, $x_2 \in L_1$ and $x_3 \in X''$. Then $xn=x_1 + x_2 +x_3 +x_4
=x+x_4$ for some $x_4 \in L_1$. Thus $xn\in L$. We have shown that
$U_d$ stabilises $L$, that $U_d \subset g_d^{-1} P_a(k) g_d \cap
G(X')(k)$ and that $\det_{L} n =1$. Thus $F(g_d(\delta h,g),s)$ is
invariant when $g$ is changed to $ng$. Then the inner integral can be
further split into two iterated integrals with an inner integral being
over $[U_d]$:
\begin{equation}
  \int_{[U_d]} f(ng)dn
\end{equation}
which is zero by cuspidality of $f$. Thus when $d>0$ these are indeed negligible orbits.

Now consider the main orbit corresponding to $d=0$. Since $ P_a(k)\cap G(X')(k) = \{1\}$ the contribution is
\begin{equation}\label{eq:main_orbit}
   \int_{ G(X')(\A)}  \sum_{\delta \in  P_a  G(X')\cap G(X_a)\lmod G(X_a)}  F((\delta h,g),s)f(g)dg.
\end{equation}

We claim that $P_a G(X')\cap G(X_a) = Q_a$. Here $Q_a$ is from our old
notation: the parabolic subgroup of $G(X_a)$ stabilising
$\ell_a^-$. It is easy to see $Q_a \subset P_a G(X')\cap G(X_a)$. Now
suppose $h \in P_aG(X')\cap G(X_a)$. Then $(h,1)=p(1,g)$ for some
$p\in P_a$ and $g\in G(X')$. We have $(h,g^{-1})\in P_a$ and thus
$(hg,1) \in P_a$ where we view $g\in G(X')$ as an element in the subgroup $G(X)$ of
$G(X_a)$ via the obvious embedding. In other words $hg$ is  contained in $P_a \cap G(X_a)$ which is isomorphic
to the group $\GL(\ell_a^+)\times U_{Q_a}$. Thus $h \in
\GL(\ell_a^+)\times U_{Q_a} \times G(X') = Q_a$.

With this we find \eqref{eq:main_orbit} is equal to
\begin{equation}
  \sum_{\gamma \in Q_a(k)\lmod G(X_a)(k)} F^f( \gamma h)
\end{equation}
for $h \in G(X_a)(\A)$ as required.
\end{proof}

It follows from the proposition above and the analysis before it (c.f. \eqref{eq:theta_back_forth_3}) we have
\begin{prop}\label{prop:theta_is_Eis}
  For $\dim Y < \dim X +a$, $\theta_{Y,\psi}^{X_a}\chi_2^{-1}\theta_{X,\psi^{-1}}^{Y}(\pi)$ is contained in the space 
  \begin{equation}
    \{ \Res_{s=\frac{1}{2}(\dim X +a -\dim Y)} E^{Q_a}(\cdot,s,F^f ) | f\in \pi \}
  \end{equation}
  where $F^f$ is defined in \eqref{eq:F^f}.
\end{prop}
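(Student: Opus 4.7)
The plan is essentially to assemble the analysis already carried out in the paragraphs preceding the statement. I would start from a typical element of $\theta_{Y,\psi}^{X_a}(\chi_2^{-1}\theta_{X,\psi^{-1}}^Y(\pi))$ as in \eqref{eq:theta_back_and_forth}. Using Prop.~\ref{prop:W_to_Wneg} to replace $(X,\psi^{-1})$ by $(X',\psi)$ and Prop.~\ref{prop:direct_sum_of_weil_rep} to combine the two theta series, this iterated integral is rewritten in the form \eqref{eq:theta_back_forth_2} as a single theta series on the doubled space $W_a = X_a\oplus X'$ integrated over $[G(X')]\times[G(Y)]$. I would then absorb the $\chi_2^{-1}$-twist on the $G(Y)$ side via \eqref{eq:change_character}, so that the $G(Y)(\A)$-action corresponds to the trivial splitting; this gives \eqref{eq:theta_back_forth_4}, which is precisely the shape of input accepted by the regularised Siegel--Weil machinery of Section~\ref{sec:reg_sw}.

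Next, I would invoke Cor.~\ref{cor:SW}. The hypothesis $\dim Y < \dim X + a$ is exactly the condition $m^c < n$ with $V^c = Y$ and $n = \dim X + a$, so the $[G(Y)]$-integration (extended by regularisation if necessary) is identified with the residue at $s = (\dim X + a - \dim Y)/2$ of a Siegel--Eisenstein series $E^{P_a}((g_a,g),s,F_\varphi)$ on $G(W_a)$, for some $K_{G(W_a)}$-finite $\varphi$ obtained from $\phi_1\otimes\phi_2$. The interchange of the $[G(X')]$- and $[G(Y)]$-integrations that is required before applying the corollary is justified exactly as in the pre-statement calculation, by the absolute convergence of the regularised theta integral $I_{\REG}$ and the adjoint property of $\omega(\alpha)$. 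At this stage a typical element of the space has been rewritten as in \eqref{eq:theta_back_forth_3}.

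I would conclude by applying Prop.~\ref{prop:int_eis_is_eis} to the $[G(X')]$-integral of $E^{P_a}((g_a,g),s,F_\varphi)$ against $f(g)$, which produces $E^{Q_a}(g_a,s,F^f)$ with $F^f$ defined by \eqref{eq:F^f}. Since extraction of a residue at a fixed point is a continuous linear functional on the space of meromorphic sections and the pole locus of $F^f(s)$ coincides with that of $F_\varphi(s)$ by the preceding lemma, residue and $[G(X')]$-integration commute. Moving the residue outside therefore yields
\begin{equation*}
  g_a \;\longmapsto\; \Res_{s=\frac{\dim X + a - \dim Y}{2}} E^{Q_a}(g_a,s,F^f),
\end{equation*}
which is exactly the claimed containment.

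The main technical obstacle, already dispatched in the preparatory lemmas, is the bookkeeping of splittings: one must verify that the successive applications of Prop.~\ref{prop:W_to_Wneg} and \eqref{eq:change_character} convert the original pair of splittings (characters $\chi_1$ on $G(X_a)$, $G(X)$ and $\chi_2$ on $G(Y)$, using $\psi$ and $\psi^{-1}$) into exactly the normalisation assumed in Section~\ref{sec:reg_sw}, namely $\chi_1$ on $G(W_a)$ and the trivial character on $G(Y)$, with $\psi$ fixed. Once this is in place, the proposition reduces to a substitution of \eqref{eq:eis=eis} into the expression \eqref{eq:theta_back_forth_3}.
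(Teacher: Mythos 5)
Your proposal follows the paper's proof essentially step by step: the rewriting via Prop.~\ref{prop:W_to_Wneg}, \eqref{eq:change_character} and Prop.~\ref{prop:direct_sum_of_weil_rep} to arrive at \eqref{eq:theta_back_forth_4}, the interchange of $[G(Y)]$- and $[G(X')]$-integration justified by the absolute convergence of $I_{\REG}$ and the adjoint property of $\omega(\alpha)$, the application of Cor.~\ref{cor:SW} under $\dim Y < \dim X + a$ giving \eqref{eq:theta_back_forth_3}, and finally Prop.~\ref{prop:int_eis_is_eis} to recognise the $[G(X')]$-integral as $E^{Q_a}(g_a,s,F^f)$. The only point you elaborate beyond what the paper spells out is the interchange of the residue with the $[G(X')]$-integration; that step is left implicit in the paper and your brief justification is a reasonable (if somewhat informal) supplement rather than a departure.
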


Now we try to remove the condition on $a$ in the previous
proposition. The idea is to take constant terms on the Eisenstein
series and on the theta lifts. Let $Q_{A-a}$ denote the parabolic
subgroup of $G(X_A)$ stabilising $\ell_{A-a}^-$ which is a subspace of
$\ell_A^-$ consisting of $A-a$ hyperbolic planes. Temporarily let $\chi_1$ be subsumed into $\pi$. Then on the
Eisenstein side we have

\begin{lemma} \label{lemma:const_term_of_eis}Consider the constant term of $E^{Q_A}(g,s,f)$ along
  $Q_{A-a}$ for $f\in \Ind_{Q_A(\A)}^{G(X_A)(\A)} (\chi_1 |\ |^s \otimes
  \pi)$. If we consider it as a function of $g\in G(X_a)$ then its residue at $s^0=\frac{1}{2}(\dim X +A -\dim Y)$ is
  \begin{enumerate}
  \item  orthogonal to all cusp forms on $G(X_a)$ for $a>0$;
  \item   in $\pi$ for $a=0$;
  \item zero for $a<0$.
  \end{enumerate}
\end{lemma}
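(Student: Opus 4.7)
The plan is to compute $[E^{Q_A}(g,s,f)]_{Q_{A-a}}$ directly by unfolding the Eisenstein series via the Bruhat decomposition $G(X_A) = \bigsqcup_r Q_A(k)\, w_r\, Q_{A-a}(k)$, in direct parallel with the proof of Prop.~\ref{prop:int_eis_is_eis}. The double cosets are parametrised by a single integer $r$ encoding the relative position of $\ell_A^-$ and the image of $\ell_{A-a}^-$ under $w_r^{-1}$; fix representatives $w_r$. Standard unfolding gives
\begin{equation*}
  [E^{Q_A}(\cdot,s,f)]_{Q_{A-a}}(h) \;=\; \sum_r \int_{U_r(\A)} f(w_r u h, s)\, du,
\end{equation*}
where $U_r \subset U_{Q_{A-a}}$ is the complement of $w_r^{-1} Q_A w_r \cap U_{Q_{A-a}}$; each summand is (a piece of) a partial intertwining operator applied to $f$, meromorphically continued in $s$.

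For each $r$, viewing $h$ as an element of $M_{Q_{A-a}}(\A) = \GL_{A-a}(\A_E) \times G(X_a)(\A)$ and isolating the $G(X_a)$-action, the summand is a section of one of three types: (i) the cuspidal representation $\chi_1\pi$ itself, which happens precisely for the ``identity'' orbit when $a=0$; (ii) an Eisenstein-type section on $G(X_a)$ induced from a proper parabolic $R_r \subsetneq G(X_a)$ with cuspidal inducing datum built from $\pi$ twisted by $\chi_1 |\cdot|^{s'}$; or (iii) a function factoring through a constant term of $f \in \pi$ along a proper parabolic of $G(X)$. Meromorphic continuation and the location of the poles of each summand follow from the standard pole analysis of Siegel-type intertwining operators.

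Extracting the residue at $s = s^0 = \tfrac{1}{2}(\dim X + A - \dim Y)$ then yields the three claims. For $a>0$, each residue term falls into class (ii): taking the $L^2$-inner product against any cusp form $\xi$ on $G(X_a)$ collapses to an inner integral of $\xi$ over $[U_{R_r}]$, which vanishes by cuspidality, giving the required orthogonality. For $a=0$, the only summand whose intertwining operator acquires a pole at $s^0$ is the one of class (i), and its residue is exactly a section of $\chi_1\pi$ (subsumed into $\pi$ as in the statement). For $a<0$, every summand is of class (iii), and by the cuspidality of $\pi$ the constant term of $f$ along any proper parabolic of $G(X)$ vanishes identically, so the whole expression, hence its residue at $s^0$, is zero.

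The main obstacle will be the double-coset bookkeeping and the precise pole analysis: one must enumerate the $w_r$, identify which of their partial intertwining operators acquire a simple pole at $s=s^0$, and compute each residue as a section on $G(X_a)$. This is a Langlands-theoretic computation close in spirit to the double-coset analysis in Prop.~\ref{prop:int_eis_is_eis}, but with the extra subtlety of tracking residues; once the structural decomposition is in hand, the orthogonality and cuspidality-vanishing arguments are routine.
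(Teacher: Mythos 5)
Your high-level strategy (constant term via double cosets, classify the summands by where they land, invoke cuspidality of $\pi$) is the right one, but there is a genuine gap that the paper addresses and you do not: $\chi_1|\cdot|^s$ is \emph{not} a cuspidal representation of $\GL_A(\A_E)$ when $A>1$, so the inducing datum $\chi_1|\cdot|^s\otimes\pi$ on $M_{Q_A}$ is not cuspidal, and the standard Langlands constant-term formula you are invoking (``constant term $=$ sum over Weyl elements of partial intertwining integrals'') does not apply to $E^{Q_A}$ directly. Concretely, your expression $\sum_r\int_{U_r(\A)}f(w_ruh,s)\,du$ is not the constant term: the genuine formula has, for each Weyl element, an additional Eisenstein-type sum on the Levi $M_{Q_{A-a}}$, and when the inducing datum is non-cuspidal one cannot simply discard the ``bad'' cells. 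The paper handles this by first rewriting $E^{Q_A}$ as (a normalised multiple of) $E^{Q_A'}$, where $Q_A'$ has Levi $(\Res_{E/k}\GL_1)^A\times G(X)$ and the inducing datum $\chi_1|\cdot|^{s_1}\times\cdots\times\chi_1|\cdot|^{s_A}\times\pi$ is cuspidal on the Levi; only then does it unfold along $Q_{A-a}$, obtaining a sum over signed permutations $w$ of Eisenstein series $E^{wQ_A'w^{-1}\cap M_{A-a}}_{M_{A-a}}(g,w\underline{s},M(w,\underline{s})F)$. That detour is not optional: without it the unfolding you write down is structurally incorrect.

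The $a=0$ step is also argued differently, and your version asserts more than you can justify. You claim ``the only summand whose intertwining operator acquires a pole at $s^0$ is the one of class (i),'' which would require a fine per-cell pole analysis of all the $M(w,\underline{s})$ at $s=s^0$. The paper sidesteps this entirely: after restricting to $g\in G(X)(\A)$, \emph{every} term $M(w,\underline{s})F$ is a section valued in $\pi$ (the intertwining operators only permute and integrate in the $\GL_1^A$-directions, leaving the $G(X)$-component untouched), so the full sum --- and hence the residue --- lies in $\pi$ regardless of which cells contribute the pole. Your $a>0$ and $a<0$ arguments are morally the same as the paper's (Eisenstein terms on $G(X_a)$ are orthogonal to cusp forms; a two-step constant term into $G(X)$ kills cuspidal $\pi$), but they only become rigorous once the reduction to the cuspidal-support parabolic is in place. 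As written, the proposal identifies the right shape of the argument but omits the one ingredient that makes the constant-term computation legitimate.
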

\begin{proof}
  We compute the constant term. Since $\chi_1 |\ |^s \otimes \pi$ may
be non-cuspidal, we go to the cuspidal support. Let $Q_A'$ be the standard
parabolic subgroup of $G(X_A)$ whose Levi is isomorphic to
$(\Res_{E/k}\GL_1)^A\times G(X)$ where $\Res_{E/k}$ is Restriction of scalar
of Weil. 
There exists a section $F$ of
\begin{equation}
  \Ind_{Q_A'(\A)}^{G(X_A)(\A)} \chi_1|\ |^{s_1}\times\cdots\times \chi_1|\ |^{s_A} \times \pi
\end{equation}
such that 
\begin{equation}
  E^{Q_A}(s,f) = \prod_{i=1}^{A} (s_i - s - \frac{A-2i+1}{2}) E^{Q_A'}(s_1,\ldots,s_A,F)|_{s_i = s + \frac{A-2i+1}{2}}.
\end{equation}
Thus we need to compute $[E^{Q_A'}(s_1,\ldots,s_A,F)]_{Q_{A-a}}$. We
consider the double cosets $Q_A' \lmod G(X_A) / Q_{A-a}$. Let $\Omega$
be the set of Weyl elements $w$ such that $w$ is of minimal length in
the double coset $Q_{A-a} w Q_A'$. If we identify $\Omega$ with the group of signed permutations which is a
subset of maps from $\{1,\ldots, A\}$ to $\{\pm 1,\ldots,\pm A\}$, then $\Omega$ is the set of maps
\begin{equation}
  \left\{w \middle| \substack{w^{-1}(i) < w^{-1}(j) \text{ if $1\le i < j \le A-a$ or $A-a+1\le i < j \le A$ }\\
 w^{-1}(i) >0 \text{ if $A-a+1 \le i \le A$} }\right\}.
\end{equation}

Set $\underline{s}=(s_1,\ldots,s_A)$ and
$w\underline{s}=(s_{w^{-1}(1)},\ldots,s_{w^{-1}(A)})$. Let $g$ be an element in the Levi $M_{A-a}(\A)$ of $Q_{A-a}(\A)$.
 Then $[E^{Q_A'}(g,
\underline{s},F)]_{Q_{A-a}}$ is equal to
\begin{equation}\label{eq:sum_of_eis}
  \sum_{w \in \Omega} E^{w Q_A'w^{-1} \cap M_{A-a}}_{M_{A-a}}(g,w\underline{s},M(w,\underline{s})F)
\end{equation}
where $M(w,\underline{s})$ is the intertwining operator associated to
$w$. Here $E^{w Q_A'w^{-1} \cap M_{A-a}}_{M_{A-a}}$ is an Eisenstein
series on $M_{A-a}$ with respect to the parabolic subgroup $w
Q_A'w^{-1} \cap M_{A-a}$. We restrict to $g\in G(X_a)(\A)$. Then in
the cone of absolute convergence \eqref{eq:sum_of_eis} is equal to
\begin{align}
  &\sum_{w \in \Omega} \sum_{\gamma\in w Q_A'w^{-1}\cap \Res_{E/k} \GL_{A-a} \lmod \Res_{E/k}\GL_{A-a}} E^{w Q_A'w^{-1} \cap G(X_a)}_{G(X_a)}(\gamma g,w\underline{s},M(w,\underline{s})F).
\end{align}
If we integrate this against a cusp form on $G(X_a)$ the integral will
vanish. This proves part (1).

Now let $a=0$. Then the residue in question becomes
\begin{equation}
 \prod_{i=1}^{A} (s_i - s - \frac{A-2i+1}{2}) (s- s^0)  \sum_{w \in \Omega}  \sum_{\gamma\in w Q_A'w^{-1}\cap \Res_{E/k}\GL_{A} \lmod \Res_{E/k}\GL_{A}} 
M(w,\underline{s})F(\gamma g, w\underline{s}).
\end{equation}
evaluated first at $s_i = s + \frac{A-2i+1}{2}$ and then at $s= s^0$. If
we restrict $g$ to the subgroup $G(X)(\A)$ of $G(X_A)(\A)$ then this is in the
space of $\pi$. Thus we have proved part (2).

Let $b>0$. For $[E^{Q_A'}(g,s_1,\ldots,s_A,F)]_{Q_{A+b}}$ we take constant term in steps: 
\begin{equation}
  [[E^{Q_A'}(g,s_1,\ldots,s_A,F)]_{Q_{A}}]_{Q_{A+b}\cap G(X_{-b})}.
\end{equation}
The residue of the first step falls in the space of $\pi$ which is assumed to be
cuspidal. Thus the second step gives $0$. Thus we have proved part
(3).
\end{proof}

With this preparation we are ready to begin
\begin{proof}[Proof of Theorem~\ref{thm:theta_back_and_forth}]
  We take $A$ large so that $\dim X +A >\dim Y$ and thus we can apply
  Prop.~\ref{prop:theta_is_Eis}. For the theta lift side, for
  any $a$ we have the tower property (Prop.~\ref{prop:tower_property})
  \begin{equation}
  [\theta_{Y,\psi}^{X_A}\chi_2^{-1}\theta_{X,\psi^{-1}}^{Y}(\pi)]_{Q_{A-a}}|_{G(X_a)}=\theta_{Y,\psi}^{X_a}\chi_2^{-1}\theta_{X,\psi^{-1}}^{Y}(\pi).
  \end{equation}

  Suppose $0<a<A$. We take constant terms along $Q_{A-a}$ on the
  Eisenstein side of the spaces and then restrict to $G(X_a)(\A)$.  By
  Lemma~\ref{lemma:const_term_of_eis}, the constant term is orthogonal
  to cusp forms on $G(X_a)$. Thus we have proved Part (2).  We then
  take constant term along $Q_A$. The residue of Eisenstein series
  falls in the space of $\chi_1\pi$. (We unsubsume the $\chi_1$.) Thus we get part (1). Now take constant
  term along $Q_{A+b}$ for $b>0$. The residue of Eisenstein series
  becomes $0$. Thus we get part (3).
\end{proof}

Finally for the irreducibility result:
\begin{proof}[Proof of Theorem~\ref{thm:theta_irred}]
  Suppose $\sigma$ is an irreducible submodule of
  $\theta_{X,\psi^{-1}}^Y(\pi)$. By
  Thm.~\ref{thm:theta_back_and_forth} the subspace
  $\theta_{Y,\psi}^{X_a}(\chi_2^{-1}\sigma)$ of
  $\theta_{Y,\psi}^{X_a}\chi_2^{-1}\theta_{X,\psi^{-1}}^{Y}(\pi)$ is
  orthogonal to cusp forms on $G(X_a)$ and also
  $\theta_{Y,\psi}^{X_{-b}}(\chi_2^{-1}\sigma)=0$. Thus by the
  cuspidality of first occurrence we must have that
  $\theta_{Y,\psi}^X(\chi_2^{-1}\sigma)$ is nonvanishing and
  cuspidal. Since $\theta_{Y,\psi}^X(\chi_2^{-1}\sigma) \subset
  \theta_{Y,\psi}^{X}\chi_2^{-1}\theta_{X,\psi^{-1}}^{Y}(\pi) =
  \chi_1\pi$ and $\pi$ is irreducible, we find
  $\theta_{Y,\psi}^X(\chi_2^{-1}\sigma)=\chi_1\pi$. Then by theta
  lifting in the other direction we find $\sigma =
  \theta_{X,\psi^{-1}}^{Y}(\pi)$. Thus $\theta_{X,\psi^{-1}}^{Y}(\pi)$
  is indeed irreducible.  
\end{proof}

\section*{Acknowledgement}
The author would like to thank Professor Dihua Jiang for numerous
helpful suggestions during the preparation of this work. Also the
author would like to thank Professor Christian Kaiser for clarifying
the concept of metaplectic group and the referees for pointing out the
inconsistency of splitting in the preliminary version. The author is
grateful to University of Minnesota where the bulk of this work was
written and Max-Planck Institute for Mathematics for its inspiring
atmosphere in which this work is finalised.

\bibliographystyle{plain}
\bibliography{MyBib}{}

\end{document}